\documentclass[11 pt]{article}

\usepackage[latin1]{inputenc}
\usepackage{amsmath,amsfonts,hyperref,amsthm, enumitem, graphicx}
\usepackage[textsize=footnotesize,color=green!40]{todonotes}
\usepackage{fullpage}
\usepackage{authblk}
\usepackage{changes}
\usepackage{bbm}
\usepackage{wrapfig}
\usepackage{caption}

\newtheorem{theorem}{Theorem}
\newtheorem{lemma}[theorem]{Lemma}
\newtheorem{proposition}[theorem]{Proposition}
\newtheorem{corollary}[theorem]{Corollary}
\newtheorem{conjecture}[theorem]{Conjecture}
\theoremstyle{definition}
\newtheorem*{definition*}{Definition}

\newtheorem*{claim}{Claim}

\newcommand{\cE}{\mathcal{E}}
\newcommand{\cF}{\mathcal{F}}
\newcommand{\cG}{\mathcal{G}}
\newcommand{\cH}{\mathcal{H}}

\newcommand{\bE}{\mathbb{E}}
\newcommand{\bZ}{\mathbb{Z}}

\newcommand{\sm}{\setminus}
\newcommand{\cmax}{m_*}

    \makeatletter
    \let\@fnsymbol\@arabic
    \makeatother

\newcommand{\eps}{\varepsilon}

\newcommand{\bP}{\mathbb{P}}

\newcommand{\cM}{\mathcal{M}}
\newcommand{\bN}{\mathbb{N}}

\newcommand{\s}{[s]}
\newcommand{\whp}[1]{1-o_{#1}(1)}
\newcommand{\defeq}{\mathrel{\mathop:}=}
\newcommand{\parts}[2]{#1_1^{#2} \cup #1_2^{#2}}
\newtheoremstyle{case}{}{}{}{}{}{:}{ }{}
\theoremstyle{case}
\newtheorem{case}{Case}

\newcommand{\N}{N}

\newcommand{\comment}[1]{}

\begin{document}
\title{Rainbow matchings in Dirac bipartite graphs}
\author{Matthew Coulson\thanks{School of Mathematics, University of Birmingham, UK. Email: mjc685@bham.ac.uk.}~ and Guillem Perarnau\thanks{School of Mathematics, University of Birmingham, UK. Email: g.perarnau@bham.ac.uk.}}

\maketitle

\begin{abstract}
We show the existence of rainbow perfect matchings in $\mu n$-bounded edge colourings of Dirac bipartite graphs, for a sufficiently small $\mu>0$. As an application of our results, we obtain several results on the existence of rainbow $k$-factors in Dirac graphs and rainbow spanning subgraphs of bounded maximum degree on graphs with large minimum degree.
\end{abstract}

\section{Introduction}


An $n \times n$ array of symbols where each symbol occurs precisely one in each row and column is called a \emph{Latin square} of order $n$.
A \emph{partial transversal} of size $k$ in a Latin square is a set of cells, including at most one from each row and each column that contains $k$ distinct symbols.
The question of finding the largest transversal in an arbitrary Latin square has attracted considerable attention.
There exist Latin squares, such as the addition table of $\bZ_n$ for even $n$, whose largest transversal has size $n-1$~\cite{edelta,wwdelta}.
It has been conjectured that this is the worst case. 
\begin{conjecture}[Ryser, Brualdi, Stein~\cite{brualdi1991combinatorial,r1967,s1975}]
\label{RBS}
Every Latin square of order $n$ contains a partial transversal of size at least $n-1$.
\end{conjecture}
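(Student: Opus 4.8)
The natural first move is to translate into the language of the rest of this paper. A Latin square of order $n$ is precisely a proper edge-colouring of $K_{n,n}$ by $n$ colours in which every colour class is a perfect matching, and a partial transversal of size $k$ is exactly a rainbow matching with $k$ edges. So Conjecture~\ref{RBS} says that every such colouring contains a rainbow matching leaving only one vertex uncovered on each side. Note that this colouring is $n$-bounded but far from $\mu n$-bounded for small $\mu$, so the main theorem of this paper does not apply off the shelf; any proof must exploit the additional structure — that the host graph is complete and that every colour class is a full perfect matching, hence far from being a local bottleneck in any large sub-grid.

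The plan is a three-phase argument in the absorption style. \emph{Phase 1 (build an absorber).} Choose at random a small family $\cA$ of vertex-disjoint short rainbow structures using only $\delta n$ of the rows, columns and colours, such that for every sufficiently small set $W$ of leftover rows, columns and colours, $\cA \cup W$ can be rearranged into a rainbow matching that saturates $V(\cA) \cup W$ exactly. Existence follows from a first-moment estimate plus pruning: every vertex--colour pair has positive density of local absorbers, a random selection gives each pair many of them, and discarding the over-used ones leaves a robust gadget. \emph{Phase 2 (nibble).} On the remaining rows, columns and colours run a semirandom/random-greedy process to obtain a rainbow matching missing only $o(n)$ rows, $o(n)$ columns and $o(n)$ colours; here one uses that each remaining colour class is a near-perfect matching of the still-large remaining grid, so a typical partial rainbow matching has many legal extensions and the process self-corrects. \emph{Phase 3 (absorb).} Feed the $o(n)$ leftover rows, columns and colours into $\cA$ to complete a rainbow matching of size $n-1$, the one permitted uncovered vertex on each side being whatever remains.

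The main obstacle is the endgame, and it is where the whole difficulty of the conjecture is concentrated. The scheme above comfortably reaches deficiency $o(n)$, and with more care one can reach deficiency $\mathrm{polylog}(n)$ or even a constant; but forcing it down to deficiency exactly $1$ is the hard problem. Once only a constant number of rows, columns and colours survive, the ``many augmenting structures'' heuristic breaks down — a single colour class, being itself a perfect matching, can be the unique obstruction to every remaining extension — so one needs either a bespoke classification of the terminal configurations, ruling out the analogue of the $\bZ_n$ (with $n$ even) obstruction that just fails to give a full transversal, or a much stronger, globally coordinated absorber set up in advance to swallow every possible constant-size leftover. Designing that absorber and proving it survives the random main phase is the crux; the rest is, by current standards, routine.
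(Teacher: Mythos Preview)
This statement is Conjecture~\ref{RBS}, not a theorem: the paper does not prove it, nor does it claim to. It is quoted as a famous open problem, with the remark that the best known result is the Hatami--Shor bound of $n - O(\log^2 n)$. So there is no ``paper's own proof'' to compare against.

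Your proposal is not a proof either, and you say so yourself. The absorption-plus-nibble scheme you outline is a reasonable description of how one recovers a deficiency of $o(n)$ (and indeed with work one can push to $O(\log^2 n)$, which is exactly the Hatami--Shor barrier), but the final paragraph correctly identifies that the whole content of the conjecture lives in closing the gap from a small leftover down to deficiency exactly~$1$. You do not supply that step; you only describe why it is hard. In particular, the ``globally coordinated absorber'' you posit in the last sentence is not constructed, and there is no known construction that handles every constant-size leftover in an arbitrary Latin square --- if there were, the conjecture would be a theorem.

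So the honest assessment is: the statement is an open conjecture, the paper treats it as such, and your write-up is a survey of a plausible line of attack together with an accurate diagnosis of the obstruction, not a proof.
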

The best known lower bound is due to Hatami and Shor~\cite{hs2008}, who showed that every Latin square of order $n$ has a partial transversal of size $n-O(\log^2(n))$. 

There is a one-to-one correspondence between Latin squares $L=(L_{ij})_{i,j\in [n]}$ of order $n$ and proper edge colourings of the complete bipartite graph $K_{n,n}$ on $2n$ vertices; simply, assign colour $L_{ij}$ to the edge $a_ib_j$, where $A=\{a_1,\dots, a_n\}$ and $B=\{b_1,\dots, b_n\}$ are the stable sets of $K_{n,n}$. 
Given a graph $G$ and an edge colouring $\chi:E(G)\to \bN$ of $G$, the subgraph $H\subseteq G$ is \emph{rainbow} if for every $c\in\bN$, $|\chi^{-1}(c)\cap E(H)|\leq 1$. Under the above correspondence,  a partial transversal of size $k$ in a Latin square is equivalent to a partial rainbow matching of size $k$.


One can extend the problem to edge colourings of $K_{n,n}$ that satisfy a milder condition. An edge colouring is \emph{$k$-bounded} if $|\chi^{-1}(c)|\leq k$ for every $c\in \bN$. Stein~\cite{s1975} conjectured that Conjecture~\ref{RBS} still holds for $n$-bounded edge colourings. This was very recently disproved by Pokrovskiy and Sudakov~\cite{bs2017}. However, positive results can be obtained if the size of each colour class is small enough.
\begin{theorem}[Erd\H{o}s, Spencer~\cite{l4es}]
\label{ES4e}
Let $K_{n,n}$ be the complete bipartite graph on $2n$ vertices, then any $(n-1)/16$-bounded edge colouring of $K_{n,n}$ contains a rainbow perfect matching.
\end{theorem}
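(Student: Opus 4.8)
The plan is to prove this by the probabilistic method, via the Lopsided Lov\'asz Local Lemma (LLLL) in the random permutation model --- this is the original argument of Erd\H{o}s and Spencer.

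\textbf{Set-up.} Identify perfect matchings of $K_{n,n}$ with permutations of $\n$: writing $A=\{a_1,\dots,a_n\}$, $B=\{b_1,\dots,b_n\}$, a permutation $\pi$ of $\n$ corresponds to the matching $M_\pi=\{a_ib_{\pi(i)}:i\in\n\}$. Pick $\pi$ uniformly at random. For each unordered pair of vertex-disjoint edges $e=a_ib_j$ and $f=a_kb_l$ (so $i\ne k$ and $j\ne l$) with $\chi(e)=\chi(f)$, let $A_{ef}$ be the event $\{e,f\in M_\pi\}=\{\pi(i)=j\}\cap\{\pi(k)=l\}$. Since any two equal-coloured edges of a matching are automatically disjoint, these are the only obstructions to $M_\pi$ being rainbow, so it suffices to show $\bP\!\left[\bigcap_{e,f}\overline{A_{ef}}\right]>0$.

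\textbf{Parameters.} Each event has $\bP[A_{ef}]=(n-2)!/n!=1/(n(n-1))$; call this $p$. For the dependency structure I would invoke the lopsidependency lemma for uniformly random permutations (the Latin transversal lemma of Erd\H{o}s and Spencer): an event forcing $\pi$ to extend a given partial injection is negatively correlated with the non-occurrence of any family of such events supported on disjoint coordinate sets. Hence $A_{ef}$ is lopsidependent only with those $A_{e'f'}$ for which $\{e',f'\}$ shares a vertex with $\{a_i,a_k,b_j,b_l\}$. To count these, fix one of these four vertices $v$: it lies on $n$ edges of $K_{n,n}$, and each such edge forms a monochromatic pair with at most $(n-1)/16-1$ other edges, so at most $n(n-1)/16$ events $A_{e'f'}$ meet $v$. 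Summing over the four vertices, the lopsidependency degree is at most $n(n-1)/4$; call this $d$.

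\textbf{Conclusion.} The symmetric LLLL applies as soon as $e\,p\,(d+1)\le 1$, and indeed $e\,p\,(d+1)\le e\left(\tfrac14+\tfrac1{n(n-1)}\right)<1$ in every case in which the statement is non-vacuous (the main term is $e/4<1$, with ample slack). Therefore $\bP\!\left[\bigcap\overline{A_{ef}}\right]>0$, so some $M_\pi$ is a rainbow perfect matching.

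\textbf{The crux.} The one step that genuinely needs care --- rather than routine LLL bookkeeping --- is the lopsidependency estimate for random permutations: that conditioning on avoiding pattern-events coordinate-disjoint from $\{\pi(i)=j,\ \pi(k)=l\}$ cannot raise the probability of the latter. I would either cite the Erd\H{o}s--Spencer Latin transversal lemma directly or reprove it via an explicit injection between classes of permutations. The generous slack in the constant (we only use $e/4<1$) is precisely why the weaker bound $(n-1)/16$ on colour-class sizes suffices and why the symmetric, rather than the asymmetric, form of the local lemma already does the job.
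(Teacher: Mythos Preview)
The paper does not prove Theorem~\ref{ES4e}; it is quoted as a known result of Erd\H{o}s and Spencer and serves only as motivation. So there is no ``paper's own proof'' to compare against. Your write-up is essentially the original Erd\H{o}s--Spencer argument and is correct: the probability estimate $p=1/(n(n-1))$, the lopsidependency graph via shared vertices, and the degree count $d\le n(n-1)/4$ are all right, and the negative-correlation property for pattern events in the uniform permutation model is exactly the ``crux'' you identify.

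Two small remarks. First, the form of the local lemma the paper actually cites from the same source (Lemma~\ref{L4}) is the condition $4pd\le 1$ rather than $ep(d+1)\le 1$; with your values this gives $4pd=1$ on the nose, which is where the constant $1/16$ comes from. Your $ep(d+1)\le 1$ also goes through for all $n$ where the hypothesis is non-vacuous, but it obscures why $1/16$ is the natural output of the method. Second, the paper's own contribution (Lemma~\ref{key}) can be read as a switching-based reworking of this argument that extends to non-complete bipartite host graphs: instead of citing the permutation lopsidependency lemma, it verifies the $p$-dependency condition directly by counting $6$-cycle switchings between perfect matchings. Specialised to $K_{n,n}$ that route recovers a qualitative version of Theorem~\ref{ES4e} (with some small $\mu$ in place of $1/16$), but not the explicit constant.
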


The goal of this paper is to obtain a sparse version of Theorem~\ref{ES4e}. A balanced bipartite graph $G$ contains a perfect matching if and only if $G$ satisfies Hall's condition. However, it is easy to see that Hall's condition is not a sufficient for the existence of a rainbow perfect matching if colour classes have linear size.  For example, consider a graph consisting of a perfect matching which trivially satisfies Hall's Condition but has no rainbow perfect matching unless each colour class has size $1$. Thus, we impose a stronger condition concerning the minimum degree of $G$. A \emph{Dirac bipartite graph} on $2n$ vertices is a balanced bipartite graph with minimum degree at least $n/2$. The main result of this paper shows the existence of rainbow perfect matchings in Dirac bipartite graphs.


\begin{theorem}
\label{mainthm}
There exist $\mu > 0$ and $n_0 \in \bN$ such that if $n \geq n_0$ and $G$ is a Dirac bipartite graph on $2n$ vertices, then any $\mu n$-bounded edge colouring of $G$ contains a rainbow perfect matching.
\end{theorem}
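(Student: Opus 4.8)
\medskip
\noindent\textit{Proof strategy.}
The plan is to prove Theorem~\ref{mainthm} by the absorption method, with the $\mu n$\nobreakdash-bounded hypothesis used in an essential way to rule out ``colour blockages''. Fix a hierarchy of constants $\mu \ll \beta \ll \gamma \ll 1$ and let $A,B$ be the sides of $G$. The idea is to reserve a small random pool of colours for the absorber, cover almost everything with a random perfect matching, clean that matching up, and then absorb the leftover, checking at each point that colours never obstruct us.

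\emph{Absorber and bulk.} First I would fix a random subset $C_0$ of roughly $\beta n$ colours and build, inside the $C_0$\nobreakdash-coloured subgraph of $G$, a random rainbow matching $M_0$ of size $\beta n$. Since $\delta(G)\ge n/2$, a routine first‑moment computation shows that with high probability $M_0$ has the usual \emph{absorbing property}: for every vertex $v$ a positive fraction of the edges $b_ia_i\in M_0$ are ``reroute slots'' for $v$ (i.e.\ $vb_i\in E(G)$ if $v\in A$, and $a_iv\in E(G)$ if $v\in B$), so that any balanced set $T$ disjoint from $V(M_0)$ with $|T|\le\gamma^2 n$ can be rerouted into $M_0$ by replacing idle edges $b_ia_i$ with pairs $xb_i,a_iy$, provided the new edges always get fresh distinct colours — a Hall‑type argument on the auxiliary bipartite graph of $T$‑pairs versus slots. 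Next, let $G'=G-V(M_0)$, which is balanced bipartite with $\delta(G')\ge n/2-\beta n$ and hence has a perfect matching. Let $M'$ be a \emph{uniformly random} perfect matching of $G'$. Using permanent estimates for bipartite graphs of linear minimum degree one gets $\bP[e\in M']=O(1/n)$ and $\bP[\{e,f\}\subseteq M']=O(1/n^2)$ for disjoint $e,f$; since every colour class spans at most $\mu n$ edges there are at most $\mu n^3$ monochromatic pairs of edges, so the expected number of monochromatic pairs of $M'$ is $O(\mu n)$. Fixing $M'$ below this expectation and deleting one edge of each monochromatic pair leaves a rainbow matching $M''$ of $G'$ missing a balanced set $T$ with $|T|=O(\mu n)\le \gamma^2 n$.

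\emph{Colours do not block the absorption.} This is the crux. Write $C''=\chi(M'')$, so $|C''|\le n$; note $C''$ and $\chi(M_0)\subseteq C_0$ are disjoint. For a fixed colour $c$ we have $\bP[c\in\chi(M')]\le\sum_{\chi(e)=c}\bP[e\in M']=O(\mu n\cdot 1/n)=O(\mu)$, and because the colouring is $\mu n$‑bounded, for each vertex $v$ and each reroute‑slot edge $vb_i$ the colour $\chi(vb_i)$ lies in $C''$ with probability only $O(\mu)$; since $M_0$ has $\Omega(\beta n)$ slots for $v$, a concentration plus union bound argument (using the negative association of edge indicators for a uniformly random perfect matching) shows that with high probability \emph{every} vertex $v$ has $\Omega(\beta n)$ slot edges whose colour lies outside $C''$. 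By the same argument, because $C_0$ is a random $O(\beta n)$‑subset of the (at least $n/(4\mu)$ many) colours, with high probability every vertex also has $\Omega(\beta n)$ slot edges whose colour lies outside $\chi(M_0)$. Intersecting, each $v\in T$ has many slots through which rerouting introduces colours disjoint from the already‑committed colour set (which at any point has size at most $|C''|+|\chi(M_0)|+O(\mu n)$, the last term accounting for earlier reroutes), so the Hall‑type rerouting of the previous paragraph goes through. The union of $M''$ with the rerouted copy of $M_0$ is then a rainbow perfect matching of $G$.

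\emph{Main obstacle.} The genuinely delicate part is exactly this colour bookkeeping: one must (a) establish the permanent estimates $\bP[e\in M'],\bP[\{e,f\}\subseteq M']=O(1/n),O(1/n^2)$ for a uniformly random perfect matching of a Dirac bipartite graph (via Brègman/Egorychev–Falikman type bounds, controlling how the permanent changes under deleting or swapping a vertex), and (b) make rigorous the ``no colour blockage'' claim, which needs negative‑correlation control for the colour‑appearance events in a random perfect matching together with the fact that $\mu n$‑boundedness forces the total number of colours to be large. A secondary difficulty is carrying out the simultaneous rerouting of all of $T$ through vertex‑disjoint slots with globally distinct fresh colours, rather than one pair at a time; I expect this to reduce to verifying Hall's condition on an auxiliary bipartite graph whose edges survive the colour restriction, which is where the smallness of $\mu$ relative to $\beta$ is used. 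If the permanent estimates prove awkward to obtain directly, an alternative is to first apply the bipartite regularity lemma, reduce to super‑regular pairs (where one may further sparsify by a random colour partition to kill cross‑pair clashes), and run the same absorption argument inside each pair, handling the exceptional vertices separately with the reserved colour pool.
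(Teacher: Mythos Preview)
Your approach via absorption is genuinely different from the paper's, which proceeds through a structural dichotomy (robust expander versus $\eps$-extremal), a 6-cycle switching argument feeding into the lopsided Local Lemma in the expander case, and a delicate hand-built partial matching across the two dense halves in the extremal case. Your scheme is attractive, but it has two real gaps.

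First, the permanent-type estimates $\bP[e\in M']=O(1/n)$ and $\bP[\{e,f\}\subseteq M']=O(1/n^2)$ are not routine for a graph with minimum degree only $n/2$ (or $n/2-\beta n$ after removing the absorber). Br\`egman and Egorychev--Falikman bound the permanent itself, not the \emph{ratio} $\mathrm{per}(G-\{a,b\})/\mathrm{per}(G)$ that you need, and the paper's 6-cycle switching (Lemma~\ref{key}) is precisely a device for establishing a conditional version of these ratio bounds; the paper explicitly notes that this device fails outside the robust-expander regime (see the example opening Section~\ref{sec:dico}). Related to this, there is no guarantee that $G'=G-V(M_0)$ has a perfect matching at all: if $G$ is $\eps$-extremal with parts $A_1,A_2,B_1,B_2$ and $V(M_0)$ does not respect the imbalance $|A_1|-|B_1|$, Hall's condition can fail in $G'$.

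Second, and more decisively, your ``no colour blockage'' step does not go through as written. A vertex $v$ of degree $n/2$ may see as few as $1/(2\mu)$ distinct colours on its incident edges (each colour used $\mu n$ times at $v$). Your slot edges $vb_i$ then share colours in blocks of size $\Theta(\mu\beta n)$, so the count of ``bad'' slots is a sum of only $O(1/\mu)$ heavy Bernoullis, each with mean $\Theta(\mu)$: its expectation is $\Theta(1)$ but its fluctuations are of the same order, and negative association of \emph{edge} indicators does not transfer to these colour-presence events (which are unions of edge events). With constant probability all of $v$'s colours land in $C''$, and then $v$ cannot be rerouted at all. The paper runs into exactly this obstruction in the extremal case (see the discussion before Lemma~\ref{boxes}) and resolves it with a bespoke deterministic colour-selection lemma rather than a concentration argument; your sketch has no substitute for this. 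At best, your outline would recover Theorem~\ref{conflicts} (minimum degree $(1/2+\eps)n$, where one is automatically a robust expander), but not the exact Dirac threshold of Theorem~\ref{mainthm}.
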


The proof of Theorem~\ref{mainthm} combines probabilistic and extremal ingredients. The main tool used to provide the existence of a rainbow matching is a lopsided version of the Lov\'{a}sz Local Lemma, which is standard in this setting.  One of the novelties of our approach is the estimation of conditional probabilities in the uniform distribution on the set of perfect matchings of a Dirac bipartite graph, via a switching argument (see Section~\ref{sec:c6}). However, this probability space often exhibits strong dependencies which limit the application of the local lemma.

In order to overcome this problem, in Section~\ref{sec:dico} we use a well-established dichotomy for Dirac bipartite graphs; either the graph has good expansion properties (\emph{robust expander}) or 
the graph consists of two (possibly unbalanced) very dense bipartite graphs of order roughly $n$ with few edges in-between (\emph{extremal graph}).   
The notion of robust expanders was first introduced by K{\"u}hn et al.~\cite{DDT} in the context of Hamiltonian digraphs (see also~\cite{kuhn2014hamilton}).
A local lemma based argument provides the existence of a rainbow perfect matching in robust expanders~(Section~\ref{sec:robexp}). However, this argument cannot be applied directly to extremal graphs. In Section~\ref{sec:ext}, we construct a rainbow perfect matching by selecting a partial matching in-between the two dense bipartite graphs that balances the remainder, followed by extending it into a rainbow perfect matching using similar arguments to the ones displayed previously. In Section~\ref{sec:final} we combine these two results, concluding that any Dirac bipartite graph with a $\mu n$-bounded edge colouring contains a rainbow perfect matching.

\medskip

Our result can be extended to a more general setting by slightly strengthening the minimum degree condition. A \emph{system of conflicts} for $E(G)$ is a set $\cF$ of unordered pairs of edges of $G$. If $\{ e, f \} \in \cF$ we say that $e$ and $f$ conflict and call $\{ e, f \}$ a \emph{conflict}.
A system of conflicts $\cF$ for $E(G)$ is \emph{$k$-bounded} if for each $e \in E(G)$, there are at most $k$ conflicts that contain $e$.

Given a graph $G$ and a system of conflicts $\cF$ for $E(G)$, the subgraph $H \subseteq G$ is \emph{$\cF$-conflict-free} if for each distinct $e,f \in E(H)$, we have  $\{ e, f \} \not \in \cF$.



Rainbow subgraphs correspond to conflict-free subgraphs of transitive systems of conflicts.
Given an edge colouring $\chi$ of $G$, we define the system of conflicts $\cF_{\chi}$ for $E(G)$ as follows
$$
\cF_{\chi} = \{ \{ e, f \}: e, f \in E(G) \text{ and } \chi(e) = \chi(f) \}
$$
Note that $\chi$ is $k$-bounded if and only $\cF_\chi$ is $(k-1)$-bounded. 

We obtain an asymptotic version of Theorem~\ref{mainthm} for bounded systems of conflicts.
\begin{theorem}
\label{conflicts}
For all $\eps > 0$ there exist $\mu > 0$ and $n_0 \in \bN$ such that if $n \geq n_0$ and $G$ is a balanced bipartite graph on $2n$ vertices with minimum degree $\delta(G) \geq (1/2 + \eps) n $, then any $\mu n$-bounded system of conflicts $\cF$ for $E(G)$ contains a conflict-free perfect matching.
\end{theorem}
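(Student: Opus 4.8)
The plan is to reduce Theorem~\ref{conflicts} to the robust-expander part of the proof of Theorem~\ref{mainthm}. Recall that that proof splits, via the dichotomy of Section~\ref{sec:dico}, into a robust-expander case (Section~\ref{sec:robexp}) and an extremal case (Section~\ref{sec:ext}). The role of the strengthened hypothesis $\delta(G) \ge (1/2+\eps)n$ is precisely to make the extremal case disappear: a balanced bipartite graph whose minimum degree is bounded away from $n/2$ is automatically a robust expander. Having reduced to robust expanders, the second ingredient is the observation that the argument of Section~\ref{sec:robexp} is entirely insensitive to whether the colour relation is transitive, so it applies verbatim to a system of conflicts.

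The first step is to check that for every $\eps > 0$ there exist $\nu = \nu(\eps) > 0$ and $\tau = \tau(\eps) > 0$ such that every balanced bipartite graph $G$ on $2n$ vertices with $\delta(G) \ge (1/2+\eps)n$ is a robust $(\nu,\tau)$-expander in the sense of Section~\ref{sec:dico}. Let $S$ lie on one side of $G$ and write $\mathrm{RN}_\nu(S)$ for its robust neighbourhood on the other side. If $|S| \ge (1/2 - \eps + \nu)n$, then by inclusion--exclusion every vertex $v$ on the other side has at least $d(v) + |S| - n \ge \nu n$ neighbours in $S$, so $\mathrm{RN}_\nu(S)$ is all of that side and $|\mathrm{RN}_\nu(S)| = n \ge |S| + \nu n$ provided $|S| \le (1-\nu)n$. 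If instead $\tau n \le |S| \le (1/2 - \eps + \nu)n$, then double counting the edges leaving $S$ gives $|S|(1/2+\eps)n \le |S|\cdot|\mathrm{RN}_\nu(S)| + \nu n^2$, whence $|\mathrm{RN}_\nu(S)| \ge (1/2+\eps)n - \nu n^2/|S| \ge (1/2+\eps)n - (\nu/\tau)n$; since the relevant function of $|S|$ is unimodal, it suffices to verify $|\mathrm{RN}_\nu(S)| \ge |S| + \nu n$ at the two endpoints of this range, which holds once $\tau$ and $\nu$ are chosen small enough in terms of $\eps$ (for instance $\tau = \eps/2$ and $\nu = \eps^2/8$). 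This computation is routine and I would include it for completeness; equivalently, it shows that the extremal alternative of Section~\ref{sec:dico} cannot occur under the hypothesis of Theorem~\ref{conflicts}.

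The second step is to re-examine Section~\ref{sec:robexp}. There the rainbow perfect matching is produced by drawing a perfect matching $M$ of $G$ uniformly at random, introducing for each monochromatic pair of edges $\{e,f\}$ the bad event $A_{\{e,f\}} = \{e \in M \text{ and } f \in M\}$, and applying the lopsided Lov\'asz Local Lemma. The bounds on $\bP(A_{\{e,f\}})$ and on the conditional probabilities entering the lopsidependency estimates are supplied by the switching argument of Section~\ref{sec:c6}, which concerns only the uniform measure on the set of perfect matchings of the expanding graph $G$ and makes no reference to the colouring. The colouring intervenes solely in (i) the number of bad events incident to a fixed edge and (ii) the resulting dependency graph, and in both places the only property used is that each edge lies in at most $\mu n$ monochromatic pairs. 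For a $\mu n$-bounded system of conflicts $\cF$ this is exactly the definition of $\mu n$-boundedness, so replacing ``monochromatic pair'' by ``conflict'' throughout the section yields: every robust $(\nu,\tau)$-expander on $2n$ vertices equipped with a $\mu(\nu,\tau)n$-bounded system of conflicts admits a conflict-free perfect matching. Feeding in the parameters $\nu(\eps),\tau(\eps)$ from the first step and setting $\mu = \mu(\nu(\eps),\tau(\eps))$ proves Theorem~\ref{conflicts}.

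The only place requiring genuine care is the second step: one must make sure that every bad event, every lopsidependency relation, and every inequality fed into the local lemma in Section~\ref{sec:robexp} depends on the colouring only through $\mu n$-boundedness, and never through the fact that ``having the same colour'' is an equivalence relation. Since the paper already works in the language of conflicts, I expect this to be bookkeeping rather than new mathematics, so that the actual content of Theorem~\ref{conflicts} is the elementary observation of the first step, namely that minimum degree $(1/2+\eps)n$ excludes the extremal configuration and thereby removes the one part of the argument where the colour structure might have been used.
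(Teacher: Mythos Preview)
Your proposal is correct and matches the paper's proof: the paper shows directly that $\delta(G)\ge(1/2+\eps)n$ forces $G$ to be a bipartite robust $(\eps/8,1/4)$-expander (via the same double-counting and case split you give), and then invokes Corollary~\ref{RPMexpander}. Note that your second step requires no re-examination at all, since Lemma~\ref{key} and Corollary~\ref{RPMexpander} are already stated for $\mu n$-bounded systems of conflicts rather than colourings; the citation is immediate.
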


Theorem~\ref{conflicts} follows as a corollary of the proof of Theorem~\ref{mainthm} for robust expanders (Section~\ref{sec:final}).
Section~\ref{sec:appl} contains two applications of Theorem~\ref{mainthm} and Theorem~\ref{conflicts}, providing the existence of rainbow $\Delta$-factors in Dirac graphs and of rainbow spanning subgraphs with bounded maximum degree in graphs with large minimum degree. We conclude the paper in Section~\ref{sec:fur_rem} with further remarks and open questions.

\section{Switching over \texorpdfstring{$6$}{6}-cycles}\label{sec:c6}
Our main tool to find conflict-free matchings is a $p$-Lopsided form of the Lov\'asz Local Lemma. In this section we introduce it and show how it will be applied.
\begin{definition*}
Let $\cE=\{E_1, E_2, \ldots, E_q\}$ be a collection of events. A graph $\cH$ with vertex set $[q]$ is a \emph{$p$-dependency graph for $\cE$} if for every $i\in [q]$ and every set $S \subseteq [q] \sm N_\cH[i]$ such that $\bP(\cap_{j \in S} E_j^c)>0$ we have
\begin{eqnarray}\label{eq:dep_cond}
\bP(E_i \vert \cap_{j \in S} E_j^c)& \leq p\;.
\end{eqnarray}
\end{definition*}

We will use the following version of the local lemma that uses $p$-dependency graphs.
\begin{lemma}[$p$-Lopsided Lov\'asz Local Lemma~\cite{l4es}]
\label{L4}
Let $\cE$ be a collection of events and let $\cH$ be a $p$-dependency graph for $\cE$. Let $d$ be the maximum degree of $\cH$.
If $4pd \leq 1$, then  
$$
\bP(\cap_{E\in \cE} E^c)\geq (1-2p)^{|\cE|}\;.
$$
\end{lemma}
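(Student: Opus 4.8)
The plan is the classical inductive proof of the Lov\'asz Local Lemma, adapted to the $p$-dependency framework. The key step is to establish the strengthening: \emph{for every $i \in [q]$ and every $S \subseteq [q] \sm \{i\}$ with $\bP(\cap_{j \in S} E_j^c) > 0$, we have $\bP(E_i \mid \cap_{j \in S} E_j^c) \leq 2p$.} Granting this, the lemma follows by ordering $\cE = \{E_1, \dots, E_q\}$ and telescoping,
$$\bP\Bigl(\cap_{E \in \cE} E^c\Bigr) = \prod_{i=1}^{q} \Bigl(1 - \bP\bigl(E_i \mid \textstyle\cap_{j < i} E_j^c\bigr)\Bigr) \ \geq\ (1-2p)^q = (1-2p)^{|\cE|},$$
where $2p < 1$ (a consequence of $4pd \le 1$ when $d \ge 1$; the case $d=0$ is immediate from the dependency condition with $S=\emptyset$) guarantees that every partial intersection $\cap_{j<i} E_j^c$ has positive probability, so each conditional probability is well defined.

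I would prove the claim by induction on $|S|$. The base case $S = \emptyset$ is exactly the dependency condition applied with the empty set, giving $\bP(E_i) \le p \le 2p$. For the inductive step, partition $S = S_1 \cup S_2$ with $S_1 = S \cap N_\cH(i)$ and $S_2 = S \sm N_\cH[i]$, write $A = \cap_{j \in S_1} E_j^c$ and $B = \cap_{j \in S_2} E_j^c$, and bound
$$\bP(E_i \mid A \cap B) \ =\ \frac{\bP(E_i \cap A \mid B)}{\bP(A \mid B)} \ \le\ \frac{\bP(E_i \mid B)}{\bP(A \mid B)}.$$
Since $S_2 \subseteq [q] \sm N_\cH[i]$ and $\bP(B) \ge \bP(A \cap B) > 0$, the dependency condition bounds the numerator by $p$. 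For the denominator, enumerate $S_1 = \{j_1, \dots, j_r\}$ with $r \le d$ and expand $\bP(A \mid B)$ as the product over $\ell$ of the factors $\bP(E_{j_\ell}^c \mid E_{j_1}^c \cap \dots \cap E_{j_{\ell-1}}^c \cap B)$; each conditioning event here involves at most $|S| - 1$ of the $E_j$'s, so the induction hypothesis gives $\bP(E_{j_\ell} \mid \cdots) \le 2p$, whence $\bP(A \mid B) \ge (1-2p)^r \ge (1-2p)^d \ge 1 - 2pd$. Combining, $\bP(E_i \mid A \cap B) \le p/(1-2pd) \le 2p$, the last inequality being precisely $1 - 2pd \ge 1/2$, i.e. $4pd \le 1$.

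The only delicate points are bookkeeping: one must check that all conditional probabilities that appear are legitimate, which follows because $\bP(\cap_{j \in S} E_j^c) > 0$ forces every sub-intersection to have positive probability, and the running bound $\bP(E_{j_\ell} \mid \cdots) \le 2p < 1$ keeps the nested conditionings in the denominator product nonzero; and one must verify that the split $S = S_1 \cup S_2$ genuinely reduces $|S|$ in every recursive call, which holds since each factor of the denominator conditions on strictly fewer than $|S|$ of the events. The sole quantitative ingredient is the elementary estimate $(1-2p)^d \ge 1 - 2pd$, so I anticipate no real obstacle here; the ``hard part'' is simply setting up the induction and the positivity hypotheses so that the argument closes cleanly.
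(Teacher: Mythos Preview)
Your proof is correct and is the standard inductive argument for the lopsided Local Lemma (as in Erd\H{o}s--Spencer). The paper itself does not supply a proof of this lemma at all: it is stated with a citation to~\cite{l4es}, together with the remark that the explicit lower bound $(1-2p)^{|\cE|}$ is implicit in the proof given there; so there is nothing to compare against beyond noting that your write-up is exactly the argument the citation points to.
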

While the statement in~\cite{l4es} does not specify the explicit lower bound, this is contained inside their proof.

The following notion will play a central role in showing the existence of conflict-free perfect matchings.
\begin{definition*}
Let $G = (A \cup B, E)$ be a balanced bipartite graph on $2n$ vertices with at least one perfect matching. Suppose $M$ is a perfect matching of $G$ and let $x=a_1b_1 \in M$. An edge $y=ab \in E(G)$ is \emph{$(x, M)$-switchable} if $y\notin M$ and the $6$-cycle $a_1b_1a_2 b a b_2$ is a subgraph of $G$, where $a_2b,ab_2\in M$.
\end{definition*}

The existence of many switchable edges in every perfect matching suffices to find a conflict-free perfect matching.
In the following lemma and in many of the subsequent results of this paper we use hierarchies of the form $1/n \ll \alpha \ll \beta \ll 1$.
We write $\alpha \ll \beta$ to mean that there exists an increasing function $f$ such that our result holds whenever $\alpha \leq f(\beta)$.
For simplicity we do not explicitly calculate such functions, however we could do so in principle should we so wish.

\begin{lemma}
\label{key}
Let $n\in\bN$ and suppose that $1/n \ll \mu \ll \gamma \leq 1$. Let $G = (A \cup B, E)$ be a balanced bipartite graph on $2n$ vertices with at least one perfect matching. Suppose that for every perfect matching $M$ of $G$ and for every $x = a_1b_1 \in M$ there are at least $\gamma n^2$ edges of $G$ that are $(x,M)$-switchable. Given a $\mu n$-bounded system of conflicts for $E(G)$, the probability that a uniformly random perfect matching of $G$ is conflict-free is at least $e^{-\mu^{1/2}n}$.
\end{lemma}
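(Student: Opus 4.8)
The natural approach is to set up the $p$-Lopsided Local Lemma (Lemma~\ref{L4}) with one "bad event" per conflict. So let $\cM$ be the uniform distribution on perfect matchings of $G$, and for each conflict $\{e,f\}\in\cF$ let $E_{\{e,f\}}$ be the event that a random perfect matching contains both $e$ and $f$. A conflict-free perfect matching is exactly a point of the probability space avoiding every $E_{\{e,f\}}$, so we want $\bP(\cap E^c) > e^{-\mu^{1/2}n}$. I would take the dependency graph $\cH$ to put an edge between two conflicts whenever they share a vertex of $G$ (i.e. some edge in one conflict meets some edge in the other): since each vertex lies in at most $n$ edges and each edge lies in at most $\mu n$ conflicts, this graph has maximum degree $d = O(\mu n^3)$ — actually one must be a little careful, the right bound is $d \le 4n\cdot \mu n \cdot (\text{something})$; the point is $d = O(\mu n^{3})$ with an absolute constant.

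**The key estimate.** The heart of the proof is showing that $\cH$ really is a $p$-dependency graph with $p$ of order $1/n^{2}$ (so that $4pd = O(\mu n) \cdot \tfrac1{n^2}\cdot n$... let me recompute: we need $4pd\le 1$, i.e. $p = O(1/(\mu n^{3}))$ roughly, but also $(1-2p)^{|\cF|}$ must beat $e^{-\mu^{1/2}n}$, and $|\cF| \le \mu n \cdot |E(G)| = O(\mu n^{3})$, so we need $p|\cF| = O(\mu^{1/2} n)$, giving $p = O(\mu^{1/2}/n^{2})$ as the target). Concretely I must bound, for a fixed conflict $\{x,f\}$ with $x=a_1b_1$, the conditional probability
\[
\bP\bigl(\,x\in M \text{ and } f\in M \;\bigm|\; \textstyle\bigcap_{j\in S} E_j^c\,\bigr)\le p
\]
where $S$ avoids the neighbourhood of $\{x,f\}$ in $\cH$, i.e. none of the conflicts indexed by $S$ touches $V(x)\cup V(f)$. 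Since $f\in M$ already forces $\bP \le \bP(x\in M\mid f\in M,\,\cap_S E_j^c)$, and conditioning on $f\in M$ just passes to the graph $G'=G-V(f)$ (still balanced, still Dirac-like, still with many switchable edges by the hypothesis applied to the slightly smaller graph — this needs the hierarchy to absorb the loss), it suffices to bound $\bP(x\in M'\mid \cap_S E_j^c)$ for a uniform perfect matching $M'$ of $G'$ by $O(1/n^{2})$.

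**The switching argument.** This is where the switchable-edges hypothesis enters and is, I expect, the main obstacle: one must compare the number of perfect matchings of $G'$ containing $x=a_1b_1$ — call this family $\cM_x$ — with the total. Given $M'\in\cM_x$, the hypothesis guarantees $\ge \gamma n^{2}$ choices of an $(x,M')$-switchable edge $y=ab$; switching along the $6$-cycle $a_1b_1a_2bab_2$ (replace $x,a_2b,ab_2$ by $a_1b_2,a_2b_1,ab$) produces a perfect matching of $G'$ \emph{not} containing $x$. The crucial extra point is that we need this switch to respect the conditioning: because $S$ avoids $V(x)$, and... hmm, actually the conditioning event $\cap_S E_j^c$ need not be preserved by an arbitrary switch, so the honest route is to fix the conditioning first and run the switching \emph{inside} the conditioned space — that is, show that within $\{M' : \cap_S E_j^c \text{ holds}\}$, most matchings avoid $x$, using that the forbidden conflicts in $S$ don't involve the four other vertices of the switching $6$-cycle, so only an $o(\gamma n^2)$ fraction of the $\ge\gamma n^2$ switchable edges can be spoilt. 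Counting with multiplicity (each post-switch matching arises from a bounded number of pre-switch ones, since the reverse switch is determined by the edge $y$ and the two $M$-edges at its endpoints), one gets $|\cM_x \cap \{\cap_S E^c\}| \le \tfrac{C}{\gamma n^{2}} |\{\cap_S E^c\}|$, hence $\bP(x\in M'\mid \cap_S E_j^c) \le C/(\gamma n^{2}) =: p$.

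**Conclusion.** With $p = C/(\gamma n^{2})$ and $d = O(\mu n^{3})$ we get $4pd = O(\mu/\gamma) \le 1$ since $\mu \ll \gamma$, so Lemma~\ref{L4} applies and
\[
\bP\bigl(\textstyle\bigcap_{\{e,f\}\in\cF} E_{\{e,f\}}^c\bigr) \ge (1-2p)^{|\cF|} \ge \exp\!\bigl(-4p|\cF|\bigr) \ge \exp\!\bigl(-C'\mu/\gamma \cdot n\bigr) \ge e^{-\mu^{1/2}n},
\]
the last step again using $1/n \ll \mu \ll \gamma$ to make $C'\mu/\gamma \le \mu^{1/2}$ and $n$ large. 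I expect the genuinely delicate parts to be (i) making the reduction "condition on $f\in M$, pass to $G-V(f)$" rigorous while keeping the switchable-edges hypothesis available on the smaller graph, and (ii) verifying that the switching operation can be carried out \emph{within} the space conditioned on $\cap_S E_j^c$ — i.e. that the conflicts in $S$, being vertex-disjoint from the switching $6$-cycle, destroy only a negligible fraction of the switchable edges, so the injectivity/multiplicity bookkeeping of the switching still yields $p = O(1/(\gamma n^{2}))$.
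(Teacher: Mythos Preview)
Your approach has a genuine gap in the switching step. You condition on $f\in M$, pass to $G'=G-V(f)$, and then claim $\bP(x\in M'\mid \cap_S E_j^c)=O(1/(\gamma n^{2}))$ via a single $6$-cycle switch around $x$. But a single switch cannot deliver this. From each $M'$ containing $x$ you obtain $\ge \gamma n^{2}/2$ good switchable edges, yet the reverse multiplicity is not $O(1)$ as you assert: given a post-switch matching $N$, the partners $a_2,b_2$ of $b_1,a_1$ in $N$ are indeed determined, but the switchable edge $y=ab$ can be any of up to $n$ edges of $N$. The double-counting therefore gives only $|\cM_x|\cdot \gamma n^{2}/2 \le n\cdot |\cM\setminus\cM_x|$, i.e.\ $p=O(1/(\gamma n))$. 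Since the correct degree bound is $d\le 4\mu n^{2}$ (four vertices, at most $n$ incident edges each, at most $\mu n$ conflicts per edge --- not $\mu n^{3}$), you obtain $4pd=\Theta(\mu n/\gamma)\gg 1$, and the Local Lemma does not apply. In effect, by bounding $\bP(f\in M\mid\cap_S E_j^c)\le 1$ you discard one of the two factors of $1/n$ that the event $\{x,f\subseteq M\}$ should contribute, and then try to recover both from a single switch around $x$.

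The paper's remedy is to switch around \emph{both} edges $x$ and $f$ of the conflict simultaneously, via two vertex-disjoint $6$-cycles, comparing the set $\cM_0$ of $S$-good matchings containing both $x$ and $f$ directly against $\cM\setminus\cM_0$. This yields $\ge(\gamma n^{2}/2)^{2}$ forward switches per matching in $\cM_0$ and reverse multiplicity $\le n^{2}$, hence $p=4/(\gamma^{2} n^{2})$ and $4pd=64\mu/\gamma^{2}\le 1$; the detour through $G'$ is then unnecessary. One smaller correction: your claim that ``the forbidden conflicts in $S$ don't involve the four other vertices of the switching $6$-cycle'' is false --- $S$ only avoids $V(x)\cup V(f)$ --- but the conclusion that the loss is negligible survives for a different reason: the two new edges $a_1b_2$ and $b_1a_2$ touch $V(x)$, so no conflict in $S$ contains them, while the third new edge $ab$ creates a violation for at most $\mu n^{2}$ choices of switchable edge.
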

\begin{proof}
Let $\Omega=\Omega(G)$ be the set of perfect matchings of $G$ equipped with the uniform distribution. By assumption, note that $\Omega \neq \emptyset$.
Let $M \in \Omega$ be a perfect matching chosen uniformly at random. Let $\cF$ be a $\mu n$-bounded system of conflicts for $E(G)$. 

For each unordered pair of edges $x,y \in E(G)$ let $E(x,y) = \{x, y \in M \}$ be the event that both $x$ and $y$ are simultaneously in $M$. 
Define
$$
Q=\left\{\{x,y\}\in \cF:\,x,y \text{ non-incident}\right\}\;,
$$
and let $q=|Q|$.
Consider the collection of events $\cE=\{E(x,y): \{x,y\}\in Q\}$. 

Write $\cE = \{E_i : i \in [q] \}$ and let $\cH$ be the graph with vertex set $[q]$ where $i,j\in [q]$ are adjacent if and only if the subgraph of $G$ that contains the set of edges $\{x,y,w,z\}$ is not a matching, where $E_i=E(x,y)$ and $E_j=E(w,z)$. 

Observe that given $\{x,y\}\in Q$, there are at most $4n$ ways to choose an edge $w\in E(G)$ that is incident either to $x$ or to $y$, and at most $\mu n$ ways to choose an edge $z\in E(G)$ with $\{w,z\}\in \cF$. Hence, the maximum degree in $\cH$ is at most $d:=4 \mu n^2$. 

Our goal is to show that $\cH$ is a $p$-dependency graph for $\cE$, for a suitably small $p>0$. Given $i \in [q]$ and $S \subseteq [q] \sm N_\cH[i]$ with $\bP(\cap_{j \in S} {E_j}^c)>0$, it suffices to show that~\eqref{eq:dep_cond} holds.

Let $E_i = E(x,y)$. We say that a perfect matching is \emph{$S$-good} if it belongs to $\cap_{j \in S} E_j^c$. Since $\bP(\cap_{j \in S} {E_j}^c)>0$, there is at least one $S$-good perfect matching.
Let $\cM=\cM(S)$ be the set of $S$-good perfect matchings and let $\cM_0 \subseteq \cM$ the set of $S$-good perfect matchings that contain both $x$ and $y$. 

Construct an auxiliary bipartite graph $\cG = ( \cM_0, \cM \sm \cM_0, E(\cG))$, where $M_0 \in \cM_0$ and $M \in \cM$ are adjacent (i.e. $M_0M\in E(\cG)$) if there exist edges $x_1,x_2,y_1,y_2 \in M_0$ and $x_3,x_4,x_5, y_3,y_4,y_5 \in M$ such that $x, x_3, x_1, x_5, x_2, x_4$ and $y, y_3, y_1, y_5, y_2, y_4$ are vertex disjoint $6$-cycles contained in $G$ (see Figure~\ref{c6switch}).

By double-counting the edges of $\cG$, we obtain
\begin{equation*}
\delta (\cM_0) | \cM_0| \leq |E(\cG)| \leq \Delta(\cM \sm \cM_0) | \cM \sm \cM_0 |\;,
\end{equation*}
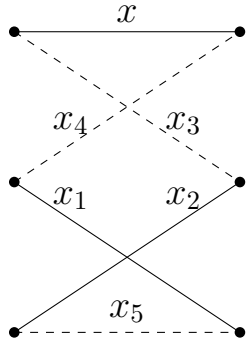
\begin{figure}[b]
\centering
\begin{tikzpicture}
\fill (0,1) circle(2pt);
\fill (0,3) circle(2pt);
\fill (0,5) circle(2pt);
\fill (3,1) circle(2pt);
\fill (3,3) circle(2pt);
\fill (3,5) circle(2pt);

\draw (0,5) -- (3,5) node[midway, above]{\Large $x$};
\draw (0,1) -- (3,3) node[pos=0.75, above]{\Large $x_2$};
\draw (0,3) -- (3,1) node[pos=0.25, above]{\Large $x_1$};

\draw [dashed] (0,1) -- (3,1) node[midway, above]{\Large $x_5$};
\draw [dashed] (0,3) -- (3,5) node[pos=0.25, above]{\Large $x_4$};
\draw [dashed] (0,5) -- (3,3) node[pos=0.75, above]{\Large $x_3$};
\end{tikzpicture}
\caption{Switching for edge $x$.\\ $x_5$ is $(x, M)$-switchable.}
\label{c6switch}
\end{figure}
from which we deduce,
\begin{equation}\label{eq:Dd}
\bP(E_i \vert \cap_{j \in S} E_j^c ) = \frac{|\cM_0|}{|\cM|} \leq \frac{|\cM_0|}{|\cM \sm \cM_0|} \leq \frac{\Delta( \cM \sm \cM_0)}{\delta(\cM_0)}\;.
\end{equation}
So, in order to prove~\eqref{eq:dep_cond} we need to bound $\Delta( \cM \sm \cM_0)$ from above and $\delta(\cM_0)$ from below.

We first bound  $\Delta( \cM \sm \cM_0)$ from above. Fix $M\in \cM\sm\cM_0$ and let us count the number of $6$-cycles of the form $xx_3x_1x_5x_2x_4$, with $x_3,x_4,x_5\in M$. Since $x_5 \in M$ is not incident with $x$, once we have chosen $x_5$ the $6$-cycle is completely determined, as the edges $x_3$ and $x_4$ are the ones in $M$ that are incident to both endpoints of $x$. There are at most $n$ choices for $x_5$, so there are at most $n$ $6$-cycles containing $x$. Similarly there are at most $n$ $6$-cycles containing $y$. It follows that $\Delta( \cM \sm \cM_0) \leq n^2$.

In order to bound $\delta(\cM_0)$ from below, fix $ M_0 \in \cM_0$. Note here that not all pairs of disjoint $6$-cycles containing $x$ and $y$, respectively, will generate an edge in $\cG$ as it may be that the perfect matching obtained by switching over the cycles is not $S$-good.

Define
$$
F(z)=\{z'\in E(G):\, z' \text{ is $(z, M_0)$-switchable and } \{ z, z' \} \not \in Q \}\;.
$$

Let $F^*(x)\subseteq F(x)$ be the subset of edges that are not incident with $y$. By assumption, there are at least $\gamma n^2$ edges that are $(x, M_0)$-switchable, from which at most $\mu n^2$ have conflicts with edges in $M_0$ and at most $2n$ are incident to $y$, implying $|F^*(x)|\geq \gamma n^2 /2$. Each edge $x_5\in F^*(x)$ uniquely determines a $6$-cycle whose switching gives rise to an $S$-good perfect matching. 
Given a choice of a $6$-cycle of the form $x x_3 x_1 x_5 x_2 x_4$, let $F^{**}(y)\subseteq F(y)$ be the subset of edges that are not incident to the vertices of the fixed $6$-cycle. Similarly as before,  $|F^{**}(y)|\geq \gamma n^2 /2$ and each edge $y_5\in F^{**}(y)$ gives rise to a $6$-cycle whose switching preserves the $S$-good condition.
We conclude that $\delta(\cM_0) \geq \gamma^2 n^4 / 4$.

Substituting into~\eqref{eq:Dd}, we obtain the desired bound
\begin{equation*}
\bP(E_i \vert \cap_{j \in S} E_j^c ) \leq \frac{4}{\gamma^2 n^2} =: p \;.
\end{equation*}
Note that $|\cE|\leq \mu n^3$.
Since, $4pd = \frac{64 \mu}{\gamma^2} \leq 1$, by the $p$-Lopsided form of the Lov\'{a}sz Local Lemma (Lemma~\ref{L4}), the probability that a uniformly random perfect matching is conflict-free is
$$
\bP(\cap_{E\in \cE} E^c)  \geq \left(1-\frac{8}{\gamma^2n^2}\right)^{\mu n^3} \geq e^{-\mu^{1/2} n}\;,
$$
and the lemma follows.
\end{proof}

\section{Dichotomy}\label{sec:dico}
In order to apply Lemma~\ref{key}, we  need to show that for every edge and every perfect matching containing it, there exist many switchable edges. However, this statement is not true for every Dirac bipartite graph. For instance, consider the graph where $n=2m+1$, $A=A_1\cup A_2$ and $B=B_1\cup B_2$ with $|A_1|=|B_2|=m+1$, and where $G[A_1,B_1]$ and $G[A_2,B_2]$ induce complete bipartite graphs and $G[A_2,B_1]$ induces a perfect matching. Clearly, $G$ is a Dirac bipartite graph. However, for every edge in $x\in E(A_1,B_2)$, and independently of the choice of $M$ containing $x$, there are at most $n$ edges that are $(x,M)$-switchable as any such edge lies in $E(A_1,B_2)$.

Our proof proceeds by splitting the class of Dirac bipartite graphs into two cases: Robust Expanders, where we show the existence of many switchable edges, and Extremal Graphs, where we proceed carefully to handle the edges that produce a small number of switchings.

For $0<\nu<1$ and $X\subseteq V(G)$, the \emph{$\nu$-robust neighbourhood} of $v$ in $G$ is defined as
$$
RN_{\nu}(X) \defeq \{ v \in V(G) : |N_G(v) \cap X| \geq \nu n \}\;.
$$
\begin{definition*}
Let $0<\nu\leq \tau<1$. A balanced bipartite graph $G = (A \cup B, E)$ on $2n$ vertices is a \emph{bipartite robust $(\nu, \tau)$-expander} if for every set $X\subseteq V(G)$ with $\tau n \leq |X| \leq (1-\tau)n$ and either $X\subseteq A$ or $X\subseteq B$, we have
\begin{equation*}
|RN_{\nu}(X)|  \geq |X| + \nu n\;.
\end{equation*}
\end{definition*}
\begin{definition*}
Let $0<\eps<1$. A balanced bipartite graph $G = (A \cup B, E)$ on $2n$ vertices is an \emph{$\eps$-extremal graph} if there exist partitions $A = \parts{A}{}$ and $B = \parts{B}{}$ such that the following properties are satisfied:
\begin{itemize}
\item[(P1)] $||A_1|-|A_2|| \leq \eps n$;
\item[(P2)] $||B_1|-|B_2|| \leq \eps n$;
\item[(P3)] $e(A_1,B_2) \leq \eps n^2$. 
\end{itemize}
\end{definition*}
The following result establishes a dichotomy between these classes. Similar ideas have already appeared in previous work on Dirac graphs~\cite{millie, KLScompatible, REepsKLOS}. The proof follows the lines of the previous approaches and we include it here for the sake of completeness.
\begin{theorem}
\label{dichotomy}
Let $n \in \bN$ and suppose that $1/n \ll \nu \ll \eps \ll \tau \ll 1$.
Let $G=(A \cup B, E)$ be a  Dirac bipartite graph on $2n$ vertices.
Then one of the following holds:
\begin{itemize}
\item[$i)$] $G$ is a bipartite robust $(\nu, \tau)$-expander;
\item[$ii)$] $G$ is an $\eps$-extremal graph.
\end{itemize}
\end{theorem}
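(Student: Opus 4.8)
The plan is to prove the dichotomy by assuming $G$ is \emph{not} a bipartite robust $(\nu,\tau)$-expander and deducing that it must be $\eps$-extremal. So suppose there is a witnessing set $X$ with $\tau n \le |X| \le (1-\tau)n$, say $X \subseteq A$ (the case $X \subseteq B$ is symmetric, swapping the roles of the two sides), such that $|RN_\nu(X)| < |X| + \nu n$. Write $Y = RN_\nu(X) \cap B$ and $\overline{Y} = B \sm Y$; every vertex of $\overline{Y}$ has fewer than $\nu n$ neighbours in $X$. The idea is that $Y$ is a small-ish set on the $B$-side that ``absorbs'' almost all edges leaving $X$, so $X$ together with $Y$ behaves like a dense bipartite block, and we then use the Dirac condition to pin down the sizes.

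First I would extract the structure. On one hand, since $\delta(G) \ge n/2$, every $a \in X$ has at least $n/2$ neighbours in $B$, and for $a \in A$ in general $|N(a)| \ge n/2$; counting edges between $X$ and $\overline Y$ gives $e(X, \overline Y) \le |X|\cdot \nu n \le \nu n^2$ since each vertex of $X$ could a priori send many edges to $\overline Y$ — wait, that bound is wrong, so instead count from the $\overline Y$ side: $e(X, \overline Y) \le |\overline Y| \cdot \nu n \le \nu n^2$. Hence almost all of the $\ge |X|\cdot n/2$ edges incident to $X$ land in $Y$, forcing $|Y| \ge n/2 - \nu n$ roughly, and in any case $|Y|$ is bounded above by $|RN_\nu(X)| < |X| + \nu n \le (1-\tau)n + \nu n$. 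Next, a robust-neighbourhood argument on the $B$-side: consider $RN_\nu(\overline Y)$. Since vertices in $X$ have few neighbours in $\overline Y$ (by definition of $Y$, a vertex $a \in X$ has $a \notin RN_\nu(\overline Y)$ provided $|N(a) \cap \overline Y| < \nu n$; but membership in $Y$ only says $|N(a)\cap X| \ge \nu n$, so I need the complementary count), I would instead directly bound $e(\overline X, Y)$ where $\overline X = A \sm X$: each vertex of $Y$ lies in $RN_\nu(X)$ so has $\ge \nu n$ neighbours in $X$, i.e. $\le n - \nu n$ neighbours outside $X$, which is not immediately small. The cleaner route is to iterate: set $A_1 = X$, $B_1 = Y$, $A_2 = A \sm A_1$, $B_2 = B \sm B_1$, and directly verify (P1)--(P3) by edge-counting, adjusting $A_1, B_1$ slightly if needed.

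For (P3): $e(A_1, B_2) = e(X, \overline Y) \le |\overline Y|\cdot \nu n \le \nu n^2 \le \eps n^2$ since $\nu \ll \eps$. That is immediate. For the balance conditions (P1), (P2): from $\delta(G)\ge n/2$ applied to vertices of $B_2 = \overline Y$, which have $< \nu n$ neighbours in $A_1$, we get each such vertex has $> n/2 - \nu n$ neighbours in $A_2$, so $|A_2| > n/2 - \nu n$, hence $|A_1| = |X| < n/2 + \nu n$; combined with $|X| \ge \tau n$ we already know $|A_1|,|A_2| \ge \tau n$, and both are $\le n/2 + \nu n$, so $\big||A_1|-|A_2|\big| < 2\nu n \le \eps n$, wait — $|A_1|+|A_2|=n$ and both $< n/2+\nu n$ gives both $> n/2 - \nu n$, so the difference is $< 2\nu n \le \eps n$: (P1) holds. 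For (P2): vertices of $A_1 = X$ have $\ge n/2$ neighbours total and $< \nu n$ in $\overline Y = B_2$? No — that is not given either; what is given is $e(X,\overline Y)\le \nu n^2$, so the \emph{average} $X$-vertex has few neighbours in $B_2$. A Markov argument: all but at most $\sqrt\nu\, n$ vertices of $X$ have $\le \sqrt\nu\, n$ neighbours in $B_2$, hence $\ge n/2 - \sqrt\nu\, n$ neighbours in $B_1$, so $|B_1| \ge n/2 - \sqrt\nu\, n$ (using $|X| \ge \tau n > \sqrt\nu\, n$); together with $|B_1| = |Y| \le |RN_\nu(X)| < |X| + \nu n \le n/2 + \nu n + \nu n$ — here I would feed in that we may assume $|X| \le n/2$ after possibly replacing $X$ by a subset, or argue $|Y|$ small directly — giving $\big||B_1|-|B_2|\big| = O(\sqrt\nu\, n) \le \eps n$ since $\sqrt\nu \ll \eps$ (by choosing the hierarchy so that even $\nu^{1/2} \ll \eps$).

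The main obstacle, and the part requiring care, is the bookkeeping that turns ``$X$ is a non-expanding set'' into a \emph{balanced} four-part partition: a priori $|X|$ and $|Y|$ could be anywhere in $[\tau n, (1-\tau)n]$, and one needs the minimum-degree condition applied on \emph{both} sides of the partition, plus a Markov-type averaging to move from the aggregate edge bound $e(A_1, B_2) \le \nu n^2$ to per-vertex degree statements, to simultaneously force $|A_1| \approx |A_2| \approx n/2$ and $|B_1| \approx |B_2| \approx n/2$. One must also handle the bifurcation $X \subseteq A$ versus $X \subseteq B$ symmetrically and make sure the final labelling of parts is consistent with (P3) rather than $e(A_2,B_1)$ being the small one (which is just a relabelling). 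I would set the constant hierarchy as $1/n \ll \nu \ll \eps \ll \tau \ll 1$ with enough room that $\nu^{1/2} \ll \eps$, so all the $O(\nu^{1/2} n)$ error terms are safely below $\eps n$, and then the three properties (P1)--(P3) follow from the edge counts above.
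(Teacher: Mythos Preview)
Your overall plan matches the paper's: assume $G$ is not a robust $(\nu,\tau)$-expander, take a witnessing set $X\subseteq A$, and set $A_1=X$, $B_1=RN_\nu(X)$, $A_2=A\sm A_1$, $B_2=B\sm B_1$. Your verification of (P3) via $e(A_1,B_2)\le |\overline Y|\cdot\nu n\le \nu n^2$ is correct, and the Markov step you use for the lower bound $|B_1|\ge n/2-\sqrt\nu\,n$ in (P2) is valid.

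There is, however, a concrete gap in your (P1) argument. From a vertex of $\overline Y$ you correctly deduce $|A_2|>n/2-\nu n$, i.e.\ $|A_1|<n/2+\nu n$. You then write ``both are $\le n/2+\nu n$'' and conclude $\big||A_1|-|A_2|\big|<2\nu n$, but nothing you have written gives the second inequality $|A_2|<n/2+\nu n$ (equivalently $|X|>n/2-\nu n$). As your own parenthetical admits, a priori $|X|$ could be as small as $\tau n$, in which case $|A_2|=(1-\tau)n$ and (P1) fails badly; the vague ``replacing $X$ by a subset'' does not help, since a subset of a non-expanding set need not be non-expanding. The paper closes exactly this gap by introducing an auxiliary scale $\delta$ with $\nu\ll\delta\ll\eps$ and splitting into three cases on $|X|$: if $|X|\le n/2-\delta n$, the edge count
\[
|X|\cdot \tfrac{n}{2}\;\le\; e(X,B)\;\le\; |X|\,|RN_\nu(X)|+\nu n^2\;<\;|X|\bigl(\tfrac{n}{2}-(\delta-\nu)n\bigr)+\nu n^2
\]
contradicts $|X|\ge\tau n$; if $|X|\ge n/2+\delta n$ then every $b\in B$ has at least $\delta n>\nu n$ neighbours in $X$, so $RN_\nu(X)=B$, contradicting non-expansion. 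Only in the middle range $|X|\in[n/2-\delta n,\,n/2+\delta n]$ does one build the partition, and there (P1) and (P2) are immediate from $\delta\ll\eps$ and the minimum-degree condition, with no need for a Markov step or for $\nu^{1/2}\ll\eps$. If you prefer to avoid the case split, the same edge count rearranged gives $|X|\ge n/2-\nu n-\nu n^2/|X|\ge n/2-\nu(1+1/\tau)n$, which also suffices since $\nu/\tau\ll\eps$; but some such lower bound on $|X|$ is indispensable.
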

\begin{proof}
Let $0<\delta<1$ be such that $\nu \ll \delta \ll \eps$.
Suppose that $G$ is not a bipartite robust $(\nu, \tau)$-expander. Thus, we may assume without loss of generality that there exists a set $X \subseteq A$ with $\tau n\leq |X|\leq (1-\tau)n$ and such that $|RN_{\nu}(X)| < |X|+\nu n$. We split the argument into three possible cases:
\begin{case}
$\tau n \leq |X| \leq \frac{n}{2} - \delta n$.
\end{case}
Since $e(X, N(X)) \geq |X| (n/2)$, we reach a contradiction:
$$e(X, N(X)) \leq |X| |RN_{\nu}(X)| + \nu n^2 \leq  |X| \left( \frac{n}{2} -(\delta-\nu)n\right) +\nu n^2 < |X| \frac{n}{2} \leq e(X, N(X))\;.
$$
\begin{case}
$\frac{n}{2} - \delta n \leq |X| \leq \frac{n}{2} + \delta n$.
\end{case}
Define $A_1 = X$, $A_2 = A \sm X$, $B_1 = RN_{\nu}(X)$, $B_2 = B \sm RN_{\nu}(X)$.
Note that $e(A_1, B_2) \leq \nu n |B_2| \leq \eps n^2$; thus, (P3) holds.
Now,  (P1) and (P2)  follow immediately since the minimum degree of $G$ is at least $n/2$.
Hence $G$ is an $\eps$-extremal graph.
\begin{case}
$\frac{n}{2} + \delta n \leq |X| \leq (1-\tau) n$.
\end{case}
Each vertex in $B$ has at least $\delta n$ neighbours in $X$.
So $RN_{\nu}(X) = B$.
Hence, $|RN_{\nu}(X)| =n \geq |X|+\nu n$, a contradiction with the choice of $X$.
\end{proof}

\section{Robust Expanders}\label{sec:robexp}

As we will show below, the robust expansion property yields to many $(x,M)$-switchable edges independently of the choice of $x$ and $M$. Thus, Lemma~\ref{key} can be directly applied to obtain the existence of a conflict-free perfect matching in robust expanders. 
\begin{lemma}
\label{expander}
Let $n\in \bN$ and suppose $1/n \ll \gamma \ll \nu \ll \tau \ll 1$.
Let $G = (A \cup B, E)$ be a bipartite robust $(\nu, \tau)$-expander on $2n$ vertices with minimum degree at least $n/2$.
Let $M$ be a perfect matching of $G$ and let $x \in M$.
Then, there are at least $\gamma n^2$ edges of $G$ that are $(x,M)$-switchable. 
\end{lemma}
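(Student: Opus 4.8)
The plan is to fix a perfect matching $M$ and an edge $x = a_1b_1 \in M$, and to produce $\gamma n^2$ edges $y = ab$ that are $(x,M)$-switchable, i.e.\ edges $y\notin M$ such that, writing $a_2b, ab_2 \in M$ for the matching edges at the endpoints of $y$, the $6$-cycle $a_1b_1a_2bab_2$ is a subgraph of $G$. Unpacking this: we need $a_1b \in E(G)$ (the edge from $a_1$ to the $M$-partner of $a$'s... wait, careful with the cycle orientation) — concretely, tracing the $6$-cycle $a_1b_1a_2bab_2$, the non-matching edges are $b_1a_2$, $ba$, $b_2a_1$, so the requirements beyond $y=ab\in E(G)\sm M$ are $a_2 b_1 \in E(G)$ and $a_1 b_2 \in E(G)$, where $a_2 = M(b)$ and $b_2 = M(a)$. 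So the combinatorial task is: count pairs of \emph{matched edges} $(a_2b', a'b_2) \in M\times M$ (with $b' = b$ and $a' = a$) such that $a_2 \in N_G(b_1)$, $b_2 \in N_G(a_1)$, and $ab = a'b'\in E(G)$. Equivalently, I would first let $B' = N_G(a_1)$ and $A' = N_G(b_1)$; then look at $M(B') \subseteq A$ (the matched partners, these are the candidate vertices $a_2$... no — let me re-set notation cleanly) — the cleanest framing: the switchable edge $y$ joins a vertex $a$ with $M(a) = b_2 \in N_G(a_1)$ to a vertex $b$ with $M(b) = a_2 \in N_G(b_1)$. So let $U = \{\, b_2 \in B : b_2 \in N_G(a_1)\,\} = N_G(a_1)$ and let $W = \{\, a_2 \in A : a_2 \in N_G(b_1)\,\} = N_G(b_1)$; then consider $M(U) \subseteq A$ and $M(W) \subseteq B$, and count edges of $G$ between $M(U)$ and $M(W)$ (minus the matching edges and minus a few degenerate cases involving $a_1,b_1$).

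The key step is to lower bound $e_G(M(U), M(W))$. Here is where robust expansion enters. We have $|U| = |N_G(a_1)| \geq n/2$ and $|W| = |N_G(b_1)| \geq n/2$, hence $|M(U)| \geq n/2$ and $|M(W)| \geq n/2$, well inside the window $[\tau n, (1-\tau)n]$ (for $n$ large, since $\tau \ll 1$; if $|M(W)|$ exceeds $(1-\tau)n$ that only helps, and one can simply pass to a subset of size exactly, say, $n/2$). Apply the robust expander property to $M(W) \subseteq A$: its $\nu$-robust neighbourhood $RN_\nu(M(W)) \subseteq B$ has size at least $|M(W)| + \nu n \geq n/2 + \nu n$. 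Since $|M(U)| \geq n/2$ and $|RN_\nu(M(W))| \geq n/2 + \nu n$, these two subsets of $B$ intersect in at least $\nu n$ vertices. Every vertex $b \in M(U) \cap RN_\nu(M(W))$ has at least $\nu n$ neighbours in $M(W)$, i.e.\ at least $\nu n$ choices of $a \in M(W)$ with $ab \in E(G)$. That gives roughly $(\nu n)(\nu n) = \nu^2 n^2$ candidate edges $ab$. Finally I would subtract off the edges that fail to be switchable for a degenerate reason — edges that lie in $M$ (at most $n$ of them), edges incident to $a_1$ or $b_1$ (at most $O(n)$), and the handful of cases where the two matching edges coincide or share a vertex with $x$ — all of which is $O(n) = o(\nu^2 n^2)$. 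Since $\gamma \ll \nu$, we have $\gamma n^2 \leq \nu^2 n^2 - O(n)$ for $n$ large, completing the count.

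The main obstacle is purely bookkeeping: making sure the $6$-cycle is genuinely a cycle — i.e.\ the six vertices $a_1, b_1, a_2, b, a, b_2$ are distinct and the edge $ab$ is not accidentally $x$ or a matching edge — and that the map from chosen pairs $(a,b)$ to switchable edges is essentially injective (it is, since $y = ab$ determines $a_2 = M(b)$ and $b_2 = M(a)$ uniquely, so there is no overcounting at all). One should also double-check the edge case where $|N_G(a_1)|$ or $|N_G(b_1)|$ is close to $n$, so that $M(U)$ or $M(W)$ is too large to feed directly into the expansion definition; as noted, restricting to an arbitrary subset of size in the admissible window fixes this, at the cost of nothing since we only need a lower bound. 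No genuinely hard estimate is involved — the content is the single application of robust expansion to convert two linear-sized vertex sets into a quadratic number of edges between their $M$-images.
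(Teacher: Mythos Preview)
Your approach is correct and essentially identical to the paper's: both apply robust expansion to $f_M(N_G(b_1))$ (your $M(W)$), intersect the resulting robust neighbourhood with $f_M^{-1}(N_G(a_1))$ (your $M(U)$) to get $\gtrsim \nu n$ vertices, each contributing $\gtrsim \nu n$ switchable edges, then subtract $O(n)$ degenerate cases. Your write-up flips the sides in the final paragraph (you have $M(W)\subseteq B$, not $A$, and hence $RN_\nu(M(W))\subseteq A$ meets $M(U)\subseteq A$), but this is exactly the bookkeeping you flag, and once corrected the argument goes through verbatim.
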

\begin{proof}
Let $f_M:A\to B$ be a bijective map defined as $f(a)=b$ if and only if $ab\in M$. 
Given $x=a_1b_1 \in M$ and an edge $y \not \in M$ not incident to $x$, there is at most one $6$-cycle in $G$ that uses $x$, $y$ and any two of the other edges in $M$. 
Also, note $y$ is $(x, M)$-switchable if and only if there is such a $6$-cycle.
Therefore, to count the number of $(x,M)$-switchable edges $y$, we count the number of $6$-cycles containing $x$ and any two of the other edges in $M$. 

Suppose that the $6$-cycle is given by the sequence of vertices $a_1b_1a_2b_3a_3b_2$, where $a_2b_3,a_3b_2\in M$.
To bound from below the number of ways to choose the $6$-cycle, we compute a lower bound on the number of choices for $a_3$ and $b_3$.
Select,
\begin{align*}
a_3 & \in RN_{\nu} (f_M(N(b_1))) \cap f_M^{-1}(N(a_1)\sm \{b_1\})\;,
\end{align*}
and let $b_2 = f_M(a_3)$. Given the choice of $a_3$, select
\begin{align*}
b_3 & \in f_M(N(b_1) \sm \{a_1, a_3 \})\cap N(a_3) \;,
\end{align*}
and let $a_2 = f_M^{-1}(b_3)$. Recall that the minimum degree is at least $n/2$. As $G$ is a bipartite robust $(\nu,\tau)$-expander, $|RN_{\nu} (f_M(N(b_1)))|\geq \frac{n}{2}+\nu n$, which implies that there are at least $ \nu n -1$ choices for $a_3$. Again, by the expansion properties of $G$, $a_3$ has at least $\nu n$ neighbours in $f_M(N(b_1))$, so there are at least $\nu n -2$ choices for $b_3$.
In total, there are at least $\gamma n^2$ choices of $6$-cycles, $a_1b_1a_2b_3a_3b_2$, or equivalently,  $\gamma n^2$ edges $y=a_3b_3\in E(G)$ that are $(x,M)$-switchable.
\end{proof}
We can combine Lemma~\ref{key} and Lemma~\ref{expander} together to conclude.
\begin{corollary}
\label{RPMexpander}
Let $n\in \bN$ and suppose $1/n \ll \mu \ll \nu \ll \tau \ll 1$.
Let $G = (A \cup B, E)$ be a bipartite robust $(\nu, \tau)$-expander on $2n$ vertices with minimum degree at least $n/2$.
Then, any $\mu n$-bounded system of conflicts for $E(G)$ contains a conflict-free perfect matching.
\end{corollary}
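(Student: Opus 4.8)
The plan is to deduce Corollary~\ref{RPMexpander} by feeding the output of Lemma~\ref{expander} into Lemma~\ref{key}. First I would fix the hierarchy: since the statement of the corollary assumes $1/n \ll \mu \ll \nu \ll \tau \ll 1$, I would insert an auxiliary constant $\gamma$ with $1/n \ll \mu \ll \gamma \ll \nu \ll \tau \ll 1$; this is possible because we are free to choose $\gamma$ as a function of $\nu$ (and $\mu$ small enough in terms of $\gamma$), and the chain of $\ll$-relations needed by each lemma is then satisfied.

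The key step is to verify the hypothesis of Lemma~\ref{key}. Let $G=(A\cup B,E)$ be a bipartite robust $(\nu,\tau)$-expander on $2n$ vertices with $\delta(G)\ge n/2$. Since $G$ is balanced bipartite with minimum degree at least $n/2$, Hall's condition holds, so $G$ has at least one perfect matching. Now apply Lemma~\ref{expander} with the chosen $\gamma$: for \emph{every} perfect matching $M$ of $G$ and \emph{every} edge $x\in M$, there are at least $\gamma n^2$ edges of $G$ that are $(x,M)$-switchable. This is exactly the hypothesis required by Lemma~\ref{key} (with this $\gamma$ and with $1/n\ll\mu\ll\gamma\le 1$).

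Therefore, given any $\mu n$-bounded system of conflicts $\cF$ for $E(G)$, Lemma~\ref{key} tells us that the probability that a uniformly random perfect matching of $G$ is conflict-free is at least $e^{-\mu^{1/2}n}>0$. A positive probability event is nonempty, so there exists a conflict-free perfect matching of $G$, which is the assertion of the corollary.

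The only real obstacle is purely bookkeeping: making sure the auxiliary constant $\gamma$ can be interleaved into the hierarchy so that both Lemma~\ref{expander} (which needs $1/n\ll\gamma\ll\nu\ll\tau\ll 1$) and Lemma~\ref{key} (which needs $1/n\ll\mu\ll\gamma\le 1$) apply simultaneously; given the paper's convention that $\alpha\ll\beta$ means the result holds whenever $\alpha\le f(\beta)$ for some increasing $f$, choosing $\gamma$ after $\nu,\tau$ and then $\mu$ after $\gamma$ causes no circularity. Beyond that, the proof is a two-line chaining of the previous results together with the trivial observation that an event of positive probability is realized.
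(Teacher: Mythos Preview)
Your proposal is correct and matches the paper's intended argument exactly: the corollary is stated immediately after Lemma~\ref{expander} with the remark that one combines Lemma~\ref{key} and Lemma~\ref{expander}, which is precisely the chaining you describe (including the insertion of an auxiliary $\gamma$ into the hierarchy). Your explicit check that $G$ has at least one perfect matching via Hall's condition is a helpful detail the paper leaves implicit.
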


\section{Extremal Graphs}\label{sec:ext}

In this section we study the existence of rainbow perfect matchings for extremal graphs.

The example displayed at the beginning of Section~\ref{sec:dico} suggests that extremal graphs have special edges that are difficult to switch; namely, the ones between $A_1$ and $B_2$. Since the partitions $A=(A_1,A_2)$ and $B=(B_1,B_2)$ can be unbalanced, it may be unavoidable to select edges in $E(A_1,B_2)$ in a perfect matching of $G$. In fact, we may have to choose linearly many such edges.

A greedy approach for choosing the edges in $E(A_1,B_2)$ is likely to fail. By the properties of the edge colouring, the graph may contain vertices that only have a constant number of colours in the edges incident to them. If one selects a partial matching $M^*$ in $E(A_1,B_2)$ and removes all the edges that have a colour in $M^*$, vertices that have few colours on their incident edges are likely to become isolated.

The way to handle this problem is given by Lemma~\ref{middle}, which shows that there is a way to select a rainbow partial matching $M^*$ in $E(A_1,B_2)$ such that $|A_i\sm V(M^*)|=|B_i \sm V(M^*)|$, for $i\in\{1,2\}$, and such that the degrees in the subgraph obtained after removing the colours in $M^*$ are similar to the ones in the original graph.

\subsection{A technical lemma}

The core of the proof of Lemma~\ref{middle} is a technical lemma that we present in this section.

We will be dealing both with multisets and with sets. We define the operators $\cap^+$ and $\sm^+$ both taking a multiset and a set and returning a multiset as follows: if $A$ is a multiset and $B$ is a set, 
\begin{align*}
A \cap^+ B \defeq \{ \{ x \in A : x \in B \} \} & & A \sm^+ B \defeq A \sm (A \cap^+B)
\end{align*}
where $\sm$ is the standard multiset difference.

%

\begin{lemma}
\label{boxes}
Let $\N\in \bN$ and suppose that $1/\N \ll \mu \ll  \nu, 1/\alpha \ll \eta \ll 1$.
Let $C_1, \ldots, C_\N$ be multisets of $\bN$ such that:
\begin{itemize}
\item[(B1)] $\nu \N \leq |C_i| \leq \N$, for every $i \in [\N]$;
\item[(B2)] $\sum_{i=1}^\N m(k, C_i) \leq \mu \N$, for every $i\in [\N]$ and every $k\in \bN$.
\end{itemize}
Let $\ell\in \bN$ with $1\leq \ell \ll \nu N$ and $\alpha\ell\in\bN$. Let $U \subseteq \bN$ be a set with $|U| = \alpha \ell$.
Then, there exists $T \subseteq U$ such that:
\begin{itemize}
\item[(T1)] $|T| \geq \ell$;
\item[(T2)] $|C_i \sm^+ T| \geq (1 - \eta) |C_i|$, for every $i \in [\N]$.
\end{itemize}
\end{lemma}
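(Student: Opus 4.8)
The plan is to find $T$ by a random selection followed by a deletion/alteration step. We have a ground set $U$ of size $\alpha\ell$, and each multiset $C_i$ has size between $\nu N$ and $N$, with the crucial property (B2) that every symbol $k$ appears, in total across all the $C_i$, at most $\mu N$ times. We want to keep a subset $T$ of $U$ of size at least $\ell$ that does not eat into any single $C_i$ by more than an $\eta$-fraction. Note $|C_i \cap^+ U| \le |U| = \alpha\ell$, which is tiny compared to $|C_i| \ge \nu N$ since $\ell \ll \nu N$; so for any fixed $i$ the worst case $T = U$ already loses at most $\alpha\ell \ll \eta\nu N \le \eta|C_i|$, and (T2) holds trivially for each individual $i$. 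The only obstacle is that there are $N$ constraints simultaneously, and $N$ is large; we cannot afford a union bound over all $i$ unless the failure probability per constraint is smaller than $1/N$.

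First I would pass to the symbols that actually matter. Say a symbol $u \in U$ is \emph{heavy} if it has multiplicity exceeding $\eta\nu N / (2\alpha)$ in at least one $C_i$; call the set of heavy symbols $H$. Since $\sum_i m(u, C_i) \le \mu N$ and each heavy occurrence of $u$ costs at least $\eta\nu N/(2\alpha)$, the total number of (symbol, index) heavy incidences is at most $\mu N / (\eta\nu N/(2\alpha)) = 2\alpha\mu/(\eta\nu)$, so $|H| \le 2\alpha\mu/(\eta\nu)$, which is $o(\ell)$ by the hierarchy $\mu \ll \nu, 1/\alpha, \eta$ (one should check $2\alpha\mu/(\eta\nu) \le (\alpha-1)\ell$, comfortably true since the left side is bounded while $\ell \ge 1$ — more carefully, arrange the hierarchy so $\mu$ is small enough in terms of $\nu, \alpha, \eta$ that $|H| < (\alpha - 1)\ell$). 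Discard $H$ from $U$; the remaining set $U' = U \sm H$ has size at least $\alpha\ell - (\alpha-1)\ell = \ell$, and every symbol in $U'$ has multiplicity at most $\eta\nu N/(2\alpha)$ in every $C_i$.

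Now I would simply take $T = U'$, or if $|U'| > \ell$ an arbitrary $\ell$-subset — no randomness needed after the heavy-symbol pruning. For (T1): $|T| \ge \ell$ by construction. For (T2): fix $i \in [N]$. The symbols removed from $C_i$ are those lying in $T \subseteq U'$, and $|C_i \cap^+ T| = \sum_{u \in T} m(u, C_i) \le |T| \cdot \max_{u \in U'} m(u, C_i) \le \alpha\ell \cdot \eta\nu N/(2\alpha) = \eta\ell\nu N/2 \le \eta \nu N \ell /(2)$. Hmm — I need this to be at most $\eta |C_i|$, and $|C_i| \ge \nu N$, so it suffices that $\eta\ell\nu N/2 \le \eta\nu N$, i.e. $\ell \le 2$, which is too weak. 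So the crude bound is not enough and I do need the random step after all.

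Let me revise the final step. Instead of taking all of $U'$, sample each $u \in U'$ independently with probability $q := 2\ell/|U'| \le 2/(\alpha - \text{(heavy fraction)}) \le $ a constant $<1$ (choosing the hierarchy so $\alpha$ is large enough), getting a random subset $T_0 \subseteq U'$. Then $\mathbb{E}|T_0| = q|U'| = 2\ell$, and by Chernoff $|T_0| \ge \ell$ with probability $1 - e^{-\Omega(\ell)}$. For the constraints: fix $i$ and let $X_i = |C_i \cap^+ T_0| = \sum_{u \in U'} m(u,C_i)\,\mathbbm{1}[u \in T_0]$, a sum of independent bounded variables each at most $b := \eta\nu N/(2\alpha)$, with $\mathbb{E}X_i = q \cdot |C_i \cap^+ U'| \le q\alpha\ell \le 2\ell \ll \nu N \le |C_i|$. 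Hmm, the mean is already $\ll \eta|C_i|$ when $\ell \ll \eta \nu N$, which we have; but the deviation could still be large because $b$ is linear in $N$. Bernstein/Hoeffding gives $\mathbb{P}(X_i \ge \eta|C_i|) \le \exp(-\Omega((\eta\nu N)^2 / (b \cdot \eta\nu N))) = \exp(-\Omega(\eta\nu N / b)) = \exp(-\Omega(\alpha))$, which is a \emph{constant} in $N$ — still not enough for a union bound over $N$ terms. The real fix: strengthen the heaviness threshold. Redefine a symbol as heavy if it has multiplicity more than, say, $\eta^2\nu N/(C\alpha \log N)$ — no, $N$-dependent thresholds are awkward.

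The clean resolution, which I would actually carry out: combine the deletion method with the heavy-symbol pruning at threshold $b = \mu^{1/2} N$ (or any threshold that is $o(\eta\nu N)$ but $\gg \mu N / \ell$, which exists by the hierarchy). With $|H| \le \mu N / b = \mu^{1/2} N = o(\ell)$... wait, that needs $\mu^{1/2} N \le \ell$, false since $\ell \ll \nu N$ could have $\ell$ tiny. So instead: the number of \emph{heavy incidences} $\sum_i |\{u \in U : m(u,C_i) > b\}|$ is $\le \mu N / b$; hence the number of indices $i$ that have \emph{any} heavy symbol is at most $\mu N / b$. For those few bad indices, handle (T2) directly: even $T = U$ removes at most $\alpha\ell \ll \eta\nu N \le \eta|C_i|$ from any $C_i$, so (T2) holds for every $i$ regardless! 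That was the point of the trivial observation above — \textbf{(T2) holds for all $i$ no matter which $\ell$-subset of $U$ we pick}, because $|U| = \alpha\ell \ll \eta\nu N \le \eta\min_i|C_i|$.

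So the whole lemma is immediate: the hierarchy gives $\alpha\ell \ll \eta\nu N$ (since $\ell \ll \nu N$ and $\alpha, 1/\eta$ are constants — formally, choose the implicit function in $\ell \ll \nu N$ small enough, relative to $\alpha, \nu, \eta$, that $\alpha\ell \le \eta\nu N$). Then \textbf{take $T = U$ itself} (which satisfies (T1) with equality-to-$\alpha\ell \ge \ell$ since $\alpha \ge 1$) and for every $i \in [N]$, $|C_i \sm^+ T| = |C_i| - |C_i \cap^+ T| \ge |C_i| - |U| = |C_i| - \alpha\ell \ge |C_i| - \eta\nu N \ge |C_i| - \eta|C_i| = (1-\eta)|C_i|$, giving (T2). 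The main obstacle I anticipated — the $N$-fold union bound — dissolves once one notices $U$ is so small that it cannot damage \emph{any} $C_i$; neither (B2) nor randomness is actually needed for this bare statement, which suggests (B2) and the multiset structure are there for the \emph{application} (Lemma~\ref{middle}), where $T$ is additionally required to meet colour/balance constraints, and I would expect the real content to live there rather than here. If the intended statement secretly wants $T$ to avoid repeated symbols or to be much larger than $U$, I would instead run the deletion argument above with the heavy-incidence count bounding the number of bad indices, cleaning those by the trivial bound and the good ones by Hoeffding.
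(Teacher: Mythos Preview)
Your final ``clean resolution'' rests on the inequality $|C_i \cap^+ T| \le |U|$, and this is false. By definition $C_i \cap^+ T$ is the multiset of elements of $C_i$ that lie in $T$, \emph{with their multiplicities in $C_i$}: $|C_i \cap^+ T| = \sum_{u\in T} m(u,C_i)$. (You wrote this correctly earlier when defining $X_i$, but then forgot it.) Condition (B2) only gives $m(u,C_i)\le \mu N$ for each individual $u$, so the best pointwise bound is $|C_i\cap^+ T|\le |T|\cdot \mu N = \alpha\ell\mu N$. In the intended application $\ell$ is linear in $N$ (it is $|A_1|-|B_1|$, of order $\nu_1 N$), so $\alpha\ell\mu N$ is of order $N^2$, hopelessly larger than $\eta|C_i|\le \eta N$. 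The trivial argument ``$T=U$ works'' is exactly what the paper uses in the first two lines of its proof, but only under the additional assumption $\ell\le 2\alpha$; the whole content of the lemma is the regime where $\ell$ is large, and there your argument simply does not apply. Your remark that ``neither (B2) nor randomness is actually needed'' should have been a warning sign.

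The actual difficulty is precisely the one you identified and then abandoned: a random $T_0\subseteq U$ has, for each fixed $i$, $\mathbb{E}|C_i\cap^+ T_0|$ small, but concentration fails when a few symbols in $U$ carry multiplicity close to $\mu N$ in $C_i$, and a union bound over $N$ indices is unaffordable. The paper's fix is a dyadic decomposition of each $C_i$ by multiplicity: the low-multiplicity part $Q_i$ (symbols with $m(k,C_i)\le \varepsilon N/\log N$ roughly) is handled by Azuma, which now gives failure probability $N^{-5}$ and survives the union bound. The high-multiplicity part $P_i=\bigcup_j P_i^j$ is handled by an alteration: sample $T_0$, then for each \emph{susceptible} index $i$ (there are at most $\ell$ of them, by (B2) and the lower bound $|C_i|\ge\nu N$) and each dyadic level $j$ at which $T_0$ hits $S_i^j$ more than twice its expectation, delete all of $S_i^j$ from $T_0$. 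One checks via Chernoff and a geometric sum over $j$ that the expected number of deleted elements is $O(\mu\ell)$, so $|T|\ge \ell$ survives with positive probability. Your heavy-symbol pruning was headed in the right direction, but a single threshold cannot do the job; the multiscale decomposition is where the work lies.
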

\begin{proof}
If $\ell \leq 2\alpha$, then let $T$ be an arbitrary subset of $U$ of size $\ell$. Since  for every $i\in [\N]$, we have $\mu \N \ell \leq \eta |C_i|$, $(T2)$ clearly holds. Throughout the proof we will assume that $\ell\geq 2\alpha$.

Let $0<\eps<1$ such that $\mu \ll \eps \ll \nu,1/\alpha$ and let $\cmax := \frac{\eps}{10 \alpha^2} \cdot \frac{\N}{\log \N}$ and let $s := \log (  \mu \N/\cmax )$.
For every $i \in [\N]$ and every $j \in \s$, define the (multi)sets
\begin{align*}
P_i^j & = \{ \{ k \in C_i : 2^{-j} \mu \N \leq m(k, C_i) \leq 2^{-(j-1)} \mu \N \} \} \\
S_i^j & = \{ k \in P_i^j \}
\end{align*}
Further, define
\begin{align*}
P_i & = \cup_{j \in \s} P_i^j \\
S_i & = \cup_{j \in \s} S_i^j \\
Q_i & = C_i \sm P_i
\end{align*}
Note that for every $k \in Q_i$, $m(k, C_i) \leq \cmax$.
Let $p_i^j = |P_i^j|$, $s_i^j = |S_i^j|$, $p_i = |P_i|$, $s_i = |S_i|$, $q_i = |Q_i|$, $c_i = |C_i|$. Then, these parameters satisfy
\begin{align}
2^{-j} \mu \N s_i^j & \leq p_i^j \leq 2^{-(j-1)} \mu \N s_i^j \label{sp}\\
\sum_{j \in \s} p_i^j & = p_i \label{sump} \\
p_i+q_i & = c_i \nonumber
\end{align}
Let $T_0 \subseteq U$ be a random subset of $U$ obtained by including each element of $U$ independently at random with probability $\delta := 3 \alpha^{-1}$.
Note that $\bE(|T_0|) = 3\ell$.
\begin{claim}
With probability $\whp{N}$, for every $i \in [\N]$, $|Q_i \sm^+ T_0| \geq (1 - 4 \alpha^{-1})q_i - \eps \N$.
\end{claim}
\begin{proof}
Fix $i\in [\N]$. If $q_i \leq \eps \N$ the statement is clearly true. So we may assume that $q_i \geq \eps \N$. For each $k \in Q_i$, define $m_k := m(k, C_i)$. 

Then,
\begin{equation}
\sum_{k \in Q_i} m_k^2 \leq \cmax \sum_{k \in Q_i} m_k = \cmax q_i \leq \frac{\eps \N}{10 \alpha^2 \log \N} \cdot q_i. \label{ck}
\end{equation}
Let $X_i = |Q_i \cap^+ T_0|$ and note that $\bE(X_i) \leq \delta q_i$. By Azuma's Inequality~(see e.g.~\cite{mrazuma}) with $m_k$ satisfying~\eqref{ck} and the fact that $q_i \geq \eps \N$,
\begin{equation*}
\bP(X_i - \bE (X_i) \geq \alpha^{-1} q_i) \leq 2 \exp \bigg( \frac{q_i^2}{2 \alpha^2 \sum_{k \in Q_i} m_k^2} \bigg) \leq \N^{-5}\;.
\end{equation*}
So, with probability $\whp{N}$, for every $i \in [\N]$, if $q_i \geq \eps \N$, then
\begin{equation*}
|Q_i \sm^+ T_0| \geq (1 - \alpha^{-1})q_i -\bE (X_i)= (1 - 4 \alpha^{-1}) q_i \geq (1 - 4 \alpha^{-1}) q_i- \eps \N\;.
\end{equation*}
\end{proof}
We now consider the sets $P_i$. 
For $\rho>0$, a pair $(i, j)$ is \emph{$\rho$-dense} if $s_i^j \geq 2^{(j-1)/2}\rho$. Let $R_i$ be the set of pairs $(i,j)$ that are $\mu^{-1/2}$-dense. 
The contribution of non-dense pairs is negligible; using~\eqref{sp}, we have
\begin{equation}
\sum_{j \not \in R_i} p_i^j \leq \mu \N \sum_{j \not \in R_i} 2^{-(j-1)}s_i^j \leq \mu^{1/2} \N \sum_{j \not \in R_i} 2^{-(j-1)/2} \leq \mu^{1/3}{\N}\;. \label{pij}
\end{equation}
We say that $i \in [\N]$ is \emph{susceptible} if $|C_i \cap^+ U| \geq \eta |C_i|$. Let $D = \{ i \in [\N] : i \text{ is susceptible} \}$.
Note that (T2) is satisfied for every $i\notin D$, as we have 
$$
|C_i \sm^+ T| \geq  |C_i \sm^+ U|= |C_i|-  |C_i \cap^+ U| \geq (1 - \eta) |C_i|\;.
$$
Since $|C_i|\geq \nu \N$, we can bound the size of $D$ as follows
\begin{equation}
|D| \leq \alpha \ell \cdot \mu \N / (\eta \nu \N) \leq  \ell \;. \label{susceptible}
\end{equation}
Finally, for every $S \subseteq \bN$ and $j\in [s]$ we say that $i \in [\N]$ is \emph{$j$-activated} by $S$ if $|S_i^j \cap S| \geq 2\delta s_i^j$.

Consider the set $T \subseteq T_0$ defined as follows: for each $i \in D$ and $j \in \s$ remove $S_i^j$ from $T_0$ if
\begin{itemize}
\item[i)] $i$ is $j$-activated by $T_0$, and
\item[ii)] $j\in R_i$ (i.e., $(i,j)$ is $\mu^{-1/2}$-dense).
\end{itemize}
Observe that by removing elements from $T_0$ we only increase the size of $Q_i \sm^+ T_0$.
From the construction of $T_0$ and using~\eqref{sp} twice, it follows that for each $i \in D$, $j \in R_i$, we have
\begin{equation*}
|P_i^j \cap^+ T| \leq \mu \N 2^{-(j-1)}|S_i^j \cap T| \leq \mu \N 2^{-(j-1)} \cdot 2 \delta s_i^j  \leq 4\delta p_i^j 
\end{equation*}
By combining this with~\eqref{pij}, we obtain
\begin{equation*}
|P_i \cap^+ T| = \sum_{j \in \s} |P_i^j \cap^+ T| = \sum_{j \in R_i} |P_i^j \cap^+ T| + \sum_{j \not \in R_i} |P_i^j \cap^+ T| \leq 4 \delta \sum_{j \in R_i} p_i^j + \sum_{j \not \in R_i} p_i^j \leq 4 \delta p_i + \mu^{1/3} \N.
\end{equation*}
Therefore, with probability $\whp{N}$, condition $(T2)$ is satisfied; that is, for every $i \in [\N]$,
\begin{align*}
|C_i \sm^+ T| &= |P_i \sm^+ T| + |Q_i \sm^+ T| \\
&\geq |P_i \sm^+ T| + |Q_i \sm^+ T_0| \\
&\geq (1-4 \delta)p_i -\mu^{1/3}\N + (1 - 4 \alpha^{-1})q_i - \eps \N \geq (1-\eta) |C_i| \label{eta}
\end{align*}
In order to conclude the proof of the lemma, it suffices to show that condition $(T1)$ holds with positive probability, from where we will deduce the existence of the desired set.
\begin{claim}
With probability at least $\frac{9}{10}$, we have $|T| \geq |T_0|-\ell$.
\end{claim}
\begin{proof}
Since $|S_i^j \cap T_0|$ is stochastically dominated by a binomial random variable with parameters $s_i^j$ and $\delta$ (there might be elements of $S_i^j$ that are not in $U$), we can use Chernoff's inequality~(see e.g. Corollary 2.3 in~\cite{JLRchernoff}) to show that
\begin{equation*}
\bP (i \text{ is } j \text{-activated}) \leq 2e^{-\frac{\delta s_i^j}{3}}\;.
\end{equation*}
If $j \in R_i$, then $s_i^j \geq 2^{(j-1)/2} \mu^{-1/2}$. Thus, $e^{-\frac{\delta s_i^j}{3}} \leq e^{-\frac{\delta 2^{(j-1)/2}}{3 \mu^{1/2}}} \leq \mu^2 2^{-j}$.
Hence, for $j \in R_i$
\begin{equation} 
\bP (i \text{ is } j \text{-activated}) \leq \mu^2 2^{-j} \;.\label{activated}
\end{equation}
Recall the following inequality which follows from~\eqref{sp} and~\eqref{sump},
\begin{equation}
\sum_{j \in \s} 2^{-j}s_i^j \leq \mu^{-1} \;.\label{sij}
\end{equation}
Define the following random variable
\begin{equation*}
Y \defeq |T_0 \sm T| 
\leq \sum_{i\in D}\sum_{j\in R_i} s_i^j\mathbbm{1} (i \text{ is } j\text{-activated})\;.
\end{equation*}
Note that the sets $D$ and $R_i$ are fully determined by $C_1,\dots, C_\N$.
Then using~\eqref{susceptible},~\eqref{activated} and~\eqref{sij}, it follows that
\begin{equation*}
\bE (Y) \leq \sum_{i\in D}\sum_{j\in R_i} s_i^j \bP (i \text{ is } j\text{-activated}) \leq \mu^2 \sum_{i\in D}\sum_{j\in R_i} 2^{-j}s_i^j  \leq \mu^2 \sum_{i\in D}\sum_{j\in \s} 2^{-j} s_i^j  \leq \mu |D| \leq \frac{\ell}{10}\;.
\end{equation*}
So, by Markov's inequality, $\bP(Y \geq \ell) \leq 1/10$.
\end{proof}
Recall that $\ell\geq 2\alpha$. Since $|T_0|$ is distributed as a binomial random variable with parameters $\alpha \ell$ and $\delta$, Chernoff's inequality implies that $\bP(|T_0| \leq 2\ell) \leq 2e^{-\frac{\ell^2}{2 \alpha \ell}} = 2 e^{-\frac{\ell}{2\alpha}} \leq \frac{2}{e}$.
Thus, with positive probability, we have
\begin{equation}
|T| \geq 2\ell-\ell \geq \ell \label{bigT}
\end{equation}
We conclude that there exists $T \subseteq U$ satisfying~(T1) and~(T2), concluding the proof of the lemma.
\end{proof}

\subsection{Superextremal graphs}

We will use Lemma~\ref{boxes} to control the effect of colour deletions in the degrees of $G$. If degrees do not shrink significantly, the graphs $G_i=G[A_i,B_i]$, $i\in\{1,2\}$, will still be fairly dense, and by applying Lemma~\ref{key} we will get the existence of a rainbow perfect matching.

However, the $\eps$-extremal condition does not ensure that the graphs $G_i$ have large minimum degree; that is, $G_i$ is not necessarily Dirac. In this section we refine the notion of extremality and we obtain a partition where the degrees of each vertex within its part is controlled. Eventually, this will allow us to count the number of switchable edges.

\begin{definition*}
Let $0< \nu_1 \leq \nu_2<1$. A balanced bipartite graph $G = (A \cup B, E)$ on $2n$ vertices is a \emph{$(\nu_1,\nu_2)$-superextremal graph} if there exist partitions $A = \parts{A}{}$ and $B = \parts{B}{}$ such that the following properties are satisfied for $i\in\{1,2\}$:
\begin{itemize}
\item[(Q1)] $e(v, B_i) \geq \frac{n}{2} - \nu_1 n$, for all but at most $\nu_1 n$ vertices $v \in A_i$;
\item[(Q2)] $e(v, B_i) \geq \nu_2 n$, for every $v \in A_i$;
\item[(Q3)] $e(v, A_i) \geq \frac{n}{2} - \nu_1 n$, for all but at most $\nu_1 n$ vertices $v \in B_i$;
\item[(Q4)] $e(v, A_i) \geq \nu_2 n$, for every $v \in B_i$;
\item[(Q5)] $| \vert A_1 \vert - \vert B_1 \vert |$, $| \vert A_1 \vert - \vert A_2 \vert | \leq \nu_1 n$;
\item[(Q6)] $e(v, B_2) \leq \nu_2 n$, for every $v \in A_1$, unless $|A_1|=|B_1|$;
\item[(Q7)] $e(v, A_1) \leq \nu_2 n$, for every $v \in B_2$, unless $|A_1|=|B_1|$;
\item[(Q8)] $|A_1|\geq |B_1|$;
\item[(Q9)] one of the following holds for $\ell := |A_1|-|B_1|$:
\begin{itemize}
\item $e(v, B_2) \geq \ell/2$, for every $v \in A_1$;
\item $e(v, A_1) \geq \ell/2$, for every $v \in B_2$.
\end{itemize}
\end{itemize}
\end{definition*}
\begin{lemma}
Let $n\in \bN$ and suppose $1/n \ll \eps \ll \nu_1 \ll \nu_2 \ll  1$.
Let $G = (A \cup B, E)$ be an $\eps$-extremal Dirac bipartite graph on $2n$ vertices.
Then, $G$ is a $(\nu_1,\nu_2)$-superextremal graph.
\end{lemma}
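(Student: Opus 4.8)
The plan is to start from the $\eps$-extremal partition $A = A_1^{} \cup A_2^{}$, $B = B_1^{} \cup B_2^{}$ guaranteed by the definition of $\eps$-extremality, clean it up to remove the vertices that behave badly, and then rename the parts so that the asymmetry conditions (Q6)--(Q9) become meaningful. First I would record what the hypotheses give for free: property (P3) says $e(A_1,B_2) \le \eps n^2$, and since $\delta(G) \ge n/2$, a vertex $v \in A_1$ with $e(v,B_2)$ large must have $e(v,B_1)$ correspondingly not too small; more usefully, the number of vertices $v \in A_1$ with $e(v,B_2) \ge \nu_1 n$ is at most $\eps n^2 / (\nu_1 n) = (\eps/\nu_1) n \ll \nu_1 n$ by the hierarchy $\eps \ll \nu_1$. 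The symmetric statement holds for $B_2$ versus $A_1$. This immediately produces, after discarding these few bad vertices into the opposite part (or just noting they are few), a partition for which "most" vertices of $A_1$ see almost all of $B_1$ and almost none of $B_2$, and symmetrically for $B_1$; by the same double-counting applied to the complement (using $|A_1|,|A_2|,|B_1|,|B_2|$ all within $\eps n$ of $n/2$ and the complete-bipartite-like density forced by $\delta(G)\ge n/2$), most vertices of $A_2$ see almost all of $B_2$, etc. So after moving at most $O((\eps/\nu_1)n)$ vertices between parts — which only perturbs the part sizes by that much, preserving (P1)/(P2) in the slightly weakened form (Q5) with $\nu_1 n$ slack — I get (Q1) and (Q3), and the analogous statements for index $2$, which are the same statements by symmetry of the definition.

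Next I would establish the lower-degree bounds (Q2) and (Q4). Here the point is that a vertex $v \in A_i$ has $e(v,B) \ge n/2$, and if $e(v, B_{3-i})$ were less than $\nu_2 n$... wait, that's the wrong direction. The correct argument: I need $e(v,B_i) \ge \nu_2 n$ for \emph{every} $v\in A_i$, including the bad ones. Suppose $v \in A_1$ has $e(v,B_1) < \nu_2 n$. Then $e(v,B_2) > n/2 - \nu_2 n$, so $v$ sees more than $n/2 - \nu_2 n$ vertices of $B_2$; but $|B_2| \le n/2 + \eps n$, so this is fine numerically — I cannot derive a contradiction from one vertex. Instead, I should \emph{define} the partition adaptively: put $v$ into $A_1$ or $A_2$ according to which of $e(v,B_1), e(v,B_2)$ is larger (breaking ties arbitrarily), starting from the partition above. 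Then every $v\in A_1$ has $e(v,B_1)\ge e(v,B_2)$, hence $e(v,B_1)\ge \tfrac12 \cdot \tfrac n2 \ge \nu_2 n$ since $\nu_2 \ll 1$. Re-sorting changes each part by at most the number of vertices it disagrees with, which (because of (Q1)-type concentration from the previous step) is again at most $O((\eps/\nu_1)n)$, so (Q5) survives and the "all but $\nu_1 n$" clauses in (Q1),(Q3) still hold. This re-sorting step is where one must be a little careful that the various small error terms ($\eps/\nu_1$, from double counting, from re-sorting) all stay below $\nu_1 n$, which they do by the hierarchy $\eps \ll \nu_1 \ll \nu_2 \ll 1$.

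Finally, (Q6)--(Q9) are bookkeeping about the \emph{direction} of the imbalance. By relabelling $A_1 \leftrightarrow A_2$ and $B_1 \leftrightarrow B_2$ simultaneously (which preserves (Q1)--(Q5) by their symmetry) I may assume $|A_1| \ge |B_1|$, giving (Q8). If $|A_1| = |B_1|$ there is nothing more to prove (the "unless" clauses in (Q6),(Q7) are vacuously satisfied and (Q9) with $\ell = 0$ is trivial). So assume $\ell := |A_1| - |B_1| \ge 1$. For (Q6): a vertex $v \in A_1$ with $e(v,B_2) > \nu_2 n$ would have to be one of the few bad vertices from the clean-up — but I need this for \emph{every} $v \in A_1$, so I should instead move any such vertex from $A_1$ to $A_2$; this strictly decreases $|A_1|$ toward $|B_1|$, and I repeat until either no such vertex remains (then (Q6) holds) or $|A_1| = |B_1|$ (then (Q6) is vacuous). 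The number of moves is at most the number of bad vertices, $O((\eps/\nu_1)n) \le \nu_1 n$, so (Q5) is undisturbed; (Q7) is handled symmetrically, and one checks the two processes don't conflict by doing the $A$-side first. For (Q9): with the final $\ell = |A_1|-|B_1| \ge 1$, note $\sum_{v\in A_1} e(v,B_2) = e(A_1,B_2) = \sum_{u\in B_2} e(u,A_1)$, and by (Q5) $|A_1| = |B_1|+\ell$ while $|B_2| = |A_2| \pm \nu_1 n = |A_1| - \ell \pm \nu_1 n$... I want a \emph{pointwise} degree bound $\ell/2$, not an average, so the cleanest route is: either (Q6) fired and every $v\in A_1$ has $e(v,B_2) \le \nu_2 n$ — but that doesn't give the lower bound $\ell/2$ — hmm. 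Actually the intended reading of (Q9) is a genuine dichotomy provable by a counting/extremal argument on the "deficiency": since $G$ is Dirac and $|B_1| = |A_1|-\ell < |A_1|$, Hall-type considerations force either every vertex of $A_1$ to have $\ge \ell/2$ neighbours outside $B_1$ (i.e. in $B_2$) or, if some vertex of $A_1$ has very few neighbours in $B_2$, then by $\delta(G)\ge n/2$ it nearly saturates $B_1$, and then a neighbourhood-expansion argument in $G[A_1,B_1]$ pushes the deficiency onto $B_2$ making every vertex of $B_2$ have $\ge \ell/2$ neighbours in $A_1$. I expect \textbf{this last dichotomy (Q9) to be the main obstacle}: the clean-up steps for (Q1)--(Q8) are routine counting with the hierarchy, but proving the pointwise $\ell/2$ bound in one of the two directions requires an honest argument, presumably isolating a vertex violating both alternatives and deriving a contradiction with $\delta(G) \ge n/2$ via the sizes of the four parts. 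Everything else is a matter of verifying that finitely many error terms of order $(\eps/\nu_1)n$ sum to less than $\nu_1 n$, which the chain $\eps \ll \nu_1 \ll \nu_2 \ll 1$ delivers.
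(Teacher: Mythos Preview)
Your overall plan --- start from the $\eps$-extremal partition, move the few vertices whose degree profile is wrong, and then relabel to fix the orientation --- is the same as the paper's. The paper uses two fixed thresholds ($n/2-\nu_3 n$ and $n/4$) to identify vertices to move, while you use a majority rule; both yield (Q1)--(Q5) after checking that only $O((\eps/\nu_1)n)$ vertices are relocated. Your treatment of (Q6)--(Q8) is also essentially the paper's: move vertices of $A_1$ (resp.\ $B_2$) with large degree into $B_2$ (resp.\ $A_1$) across, stopping once either none remain or the sides are balanced. One caution: you should do the $A_1\to A_2$ and $B_2\to B_1$ moves \emph{jointly}, choosing a total of exactly $|A_1|-|B_1|$ movers when that many exist, rather than sequentially; otherwise the $B$-step can overshoot and violate (Q8).

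Where you go astray is in flagging (Q9) as ``the main obstacle'' and reaching for Hall-type or expansion arguments. It is in fact the easiest step, and your own last sentence (``deriving a contradiction with $\delta(G)\ge n/2$ via the sizes of the four parts'') is the whole proof: from $|A_1|-|B_1|=\ell$ and $|A_1|+|A_2|=|B_1|+|B_2|=n$ one gets $|B_1|+|A_2|=n-\ell$, so $\min(|B_1|,|A_2|)\le n/2-\ell/2$. If $|B_1|\le n/2-\ell/2$, then every $v\in A_1$ has $e(v,B_2)\ge \delta(G)-|B_1|\ge \ell/2$; if $|A_2|\le n/2-\ell/2$, then every $v\in B_2$ has $e(v,A_1)\ge \delta(G)-|A_2|\ge \ell/2$. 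That is the entire argument for (Q9), and no expansion or Hall reasoning is needed.
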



\begin{proof}
Since $G$ is an $\eps$-extremal graph, there exist partitions $A = \parts{A}{1}$ and $B = \parts{B}{1}$ satisfying $(P1)$, $(P2)$ and $(P3)$.
Let $0<\nu_3\leq \nu_4<1$ such that $\eps \ll \nu_3 \ll \nu_1 \ll \nu_4 \ll \nu_2$ and define
\begin{align*}
X_1^1 = \left\{ v \in A_1^1 : e(v, B_1^1) \leq \frac{n}{2} - \nu_3 n \right\} & & X_1^2 = \left\{ v \in A_1^1 : e(v, B_1^1) \leq \frac{n}{4} \right\} \\
X_2^1 = \left\{ v \in A_2^1 : e(v, B_2^1) \leq \frac{n}{2} - \nu_3 n \right\} & & X_2^2 = \left\{ v \in A_2^1 : e(v, B_2^1) \leq \frac{n}{4} \right\} \\
Y_1^1 = \left\{ v \in B_1^1 : e(v, A_1^1) \leq \frac{n}{2} - \nu_3 n \right\} & & Y_1^2 = \left\{ v \in B_1^1 : e(v, A_1^1) \leq \frac{n}{4} \right\} \\
Y_2^1 = \left\{ v \in B_2^1 : e(v, A_2^1) \leq \frac{n}{2} - \nu_3 n \right\} & & Y_2^2 = \left\{ v \in B_2^1 : e(v, A_2^1) \leq \frac{n}{4} \right\}
\end{align*}
We double count edges to bound the size of these sets.
Note that $e(A_1^1, B_1^1) \geq \frac{n}{2} |A_1^1| - \eps n^2$ by counting from $A_1^1$.
Alternately, we can also obtain that $e(A_1^1, B_1^1) \leq |X_1^1|(\frac{n}{2}-\nu_3 n)+(|A_1^1|-|X_1^1|)|B_1^1|$.
Combining these two inequalities yields
\begin{equation*}
|X_1^1|\left(|B_1^1| - \frac{n}{2} + \nu_3 n\right) \leq |A_1^1|\left(|B_1^1| - \frac{n}{2}\right) + \eps n^2 \leq 2 \eps n^2.
\end{equation*}
Observe that $|B_1^1| \geq \frac{n}{2}-\eps n$ and so $|B_1^1| - \frac{n}{2} + \nu_3 n  \geq \frac{\nu_3 n}{2}$. Therefore $|X_1^1| \leq \nu_3 n$.
Similarly, one can deduce that $|X_1^2| \leq 9 \eps n$. Analogous computations lead to $|X_2^1|, |Y_1^1|, |Y_2^1|\leq \nu_3 n$ and to $|X_2^2|, |Y_1^2|, |Y_2^2|\leq 9\eps n$.
Now, we define
\begin{align*}
A_1^2 = (A_1^1 \sm X_1^2) \cup X_2^2 & & B_1^2 = (B_1^1 \sm Y_1^2) \cup Y_2^2 \\
A_2^2 = (A_2^1 \sm X_2^2) \cup X_1^2 & & B_2^2 = (B_2^1 \sm Y_2^2) \cup Y_1^2
\end{align*}
Without loss of generality, $|A_1^2| \geq |B_1^2|$; otherwise we swap the labels of $A_1^2$ and $A_2^2$, and the labels of  $B_1^2$ and $B_2^2$. By swapping the labels we lose control of $e(A^2_1,B^2_2)$. However, at this point, this condition is no longer needed, as we have a bound on the size of the sets $X_i^j$ and $Y_i^j$, for $i,j\in\{1,2\}$. 

Let
\begin{align*}
X_1^3 = \{ v \in A_1^2 : e(v, B_2^2) \geq \nu_4 n \} & & Y_2^3 = \{ v \in B_2^2 : e(v, A_1^2) \geq \nu_4 n \}
\end{align*}
If $|X_1^3| + |Y_2^3| \geq |A_1^2|-|B_1^2|$, choose $X_1^4 \subseteq X_1^3$ and $Y_2^4 \subseteq Y_2^3$ arbitrarily such that $|X_1^4| + |Y_2^4| = |A_1^2|-|B_1^2|$.
Otherwise, let $X_1^4 = X_1^3$ and $Y_2^4 = Y_2^3$. Recall that, since $G$ is $\eps$-extremal, it satisfies $n/2-\eps n \leq |A_1^1|,|B_1^1|\leq n/2+\eps n$. Thus, we have 
$$
|X_1^4|\leq |A_1^2|-|B_1^2| \leq |A_1^1|-|B_1^1|+18\eps n\leq 20\eps n\;.
$$
and similarly for $Y_2^4$.

We define
\begin{align*}
A_1 = A_1^2 \sm X_1^4 & & A_2 = A_2^2 \cup X_1^4 & & B_1 = B_1^2 \cup Y_2^4 & & B_2 = B_2^2 \sm Y_2^4
\end{align*}
We claim that the partitions $A = \parts{A}{}$ and $B = \parts{B}{}$ satisfy properties (Q1)-(Q9), and so, $G$ is a $(\nu_1,\nu_2)$-superextremal graph. 

Let us first check that property (Q1) is satisfied. 
Observe that all the vertices in $A_1^1$, excluding the ones in $X_1^1$, have degree at least $n/2 - \nu_3 n$. Since $|X_1^2|\leq 9\eps n$, all the vertices in $A_1^2$ have degree at least $n/2 - \nu_3 n-9\eps n$, excluding the ones in $X_1^1\cup X_2^2$. Since $|X_1^4|\leq 20\eps n$, all the vertices in $A_1$ have degree at least $n/2 - \nu_3 n-29\eps n\geq n/2 -\nu_1 n$, excluding the ones in $X_1^1\cup X_2^2$. Moreover, $|X_1^1\cup X_2^2|\leq \nu_3 n+9\eps n\leq \nu_1 n$, so (Q1) follows. Similar arguments yield to properties (Q2)-(Q4) and (Q6)-(Q7).

Property (Q8) follows from the choice of $X_1^4$ and $Y_2^4$,  since $|A_1| = |A_1^2|-|X_1^4| \geq |B^2_1| + |Y_2^4|= |B_1|$. Property (Q5) follows since 
$|A_1|-|B_1| \leq |A_1^2|-|B_1^2| \leq 20\eps n \leq \nu_1 n$ (and since a similar computation bounds $||A_1|-|A_2||$).

Finally, Property (Q9) follows by noting that if $\ell=|A_1| - |B_1|$ (and $|A_2| = |B_2| - \ell$), then either $|B_1|$ or $ |A_2|$ is at most $n/2 - \ell/2$ thus requiring minimum degree $\ell/2$ either from $A_1$ to $B_2$ or from $B_2$ to $A_1$, respectively.

\end{proof}

\subsection{Selecting a rainbow partial matching between parts}
Given a superextremal graph $G$ with partitions $A=\parts{A}{}$ and $B=\parts{B}{}$, in this section we will show the existence of a  rainbow partial matching $M^*$ in $G[A_1,B_2]$ of size $\ell=|A_1|-|B_1|$ such that the graph $H$ resulting from removing all edges incident to $M^*$ and all edges with colours that appear in $M^*$, has similar degrees as the graph $G$.

\begin{lemma}
\label{middle}
Let $n,\ell\in \bN$ and suppose $1/n \ll \mu \ll \nu_1 \ll \nu_2 \ll \nu_3\ll \eta_1 \ll 1$.
Let $G = (A \cup B, E)$ be a $(\nu_1, \nu_3)$-superextremal graph with partitions $A = \parts{A}{}$ and $B= \parts{B}{}$.
Then, any $\mu n$-bounded edge colouring $\chi$ of $G$ admits a rainbow matching $M^*$ of size $\ell = |A_1|-|B_1|$ such that the following holds. Let $H=(A^H\cup B^H,E^H)$ be the graph where $A^H=A\sm V(M^*)$, $B^H=B\sm V(M^*)$ and 
$$
E^H=\{x=ab\in E(G):\, a,b\notin V(M^*),\,\chi(x)\notin \chi(E(M^*))\}\;.
$$
Let $n_H:=n-\ell$. Then, there exist partitions $A^H=A_1^H\cup A_2^H$ and $B^H = B_1^H\cup B_2^H$ that satisfy the following properties for $i\in\{1,2\}$:
\begin{itemize}
\item[(R1)] $e_H(v, B_i^H) \geq (1- \eta_1) \frac{n_H}{2}$, for all but at most $\nu_1 n$ vertices $v \in A_i^H$;
\item[(R2)] $e_H(v, B_i^H) \geq  \nu_2 n_H$, for every $v \in A_i^H$;
\item[(R3)] $e_H(v, A_i^H) \geq (1- \eta_1) \frac{n_H}{2}$, for all but at most $\nu_1 n$ vertices $v \in B_i^H$;
\item[(R4)] $e_H(v, A_i^H) \geq \nu_2 n_H$, for every $v \in B_i^H$;
\item[(R5)] $\vert A_1^H \vert = \vert B_1^H \vert$, $\vert A_2^H \vert = \vert B_2^H \vert$ and $\vert A_1^H \vert - \vert A_2^H \vert \leq \nu_1 n_H$.
\end{itemize}
\end{lemma}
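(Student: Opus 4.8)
The plan is to construct $M^*$ greedily, one edge at a time, while maintaining an invariant that the ``good'' vertices (those with high degree into the appropriate side) retain high degree in the remaining graph. By (Q8), we have $|A_1| \ge |B_1|$, so $\ell = |A_1| - |B_1| \ge 0$, and by (Q9) we may assume without loss of generality that $e(v, B_2) \ge \ell/2$ for every $v \in A_1$ (the other case is symmetric, building the matching from the $B_2$ side). Each time we wish to add an edge to $M^*$, we will pick an unused vertex $a \in A_1$, and we need to choose a neighbour $b \in B_2 \setminus V(M^*)$ with $\chi(ab) \notin \chi(E(M^*))$. The bad colours forbid at most $|M^*| \cdot \mu n \le \ell \mu n$ edges at $a$, and $V(M^*)$ forbids at most $\ell$ more; since $e(a, B_2) \ge \ell/2 \gg \ell \mu n + \ell$... wait, this fails if $\ell$ is small relative to the error terms. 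The resolution is to observe that if $\ell \le \nu_1 n$ (which always holds by (Q5)), the degree guarantees from (Q2), (Q4) are of order $\nu_3 n \gg \ell$, and we should select $b$ using the high $B_2$-degree: actually the cleanest route is to reserve the choice of neighbours via Lemma~\ref{boxes}.

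More precisely, the main step will be to \emph{pre-select} a set of colours to be used — or rather, to invoke Lemma~\ref{boxes} to guarantee that deleting the colours appearing on $M^*$ does not destroy the degree conditions. Set $N = 2n$ (or split into two applications, one per side), let the multisets $C_i$ record, for each vertex $v$, the multiset of colours on the edges of $G$ incident to $v$ landing in the relevant part ($B_1^H$ or $B_2^H$ etc.); condition (B1) follows from (Q1)--(Q4) since every vertex has at least $\nu_2 n$ (hence $\ge \nu' N$) such edges, and (B2) follows from $\mu n$-boundedness of $\chi$ summed appropriately. Take $U$ to be a candidate pool of colours of size $\alpha \ell$ available on $G[A_1, B_2]$, and $T$ the subset from Lemma~\ref{boxes}: $|T| \ge \ell$ guarantees enough colours to build a rainbow matching of size $\ell$, and (T2) guarantees $|C_i \setminus^+ T| \ge (1-\eta)|C_i|$, i.e. deleting these colours costs each vertex at most an $\eta$-fraction of its degree. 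Then build $M^*$ inside $G[A_1, B_2]$ greedily using only colours from $T$: at each step, with $\ell - 1$ colours used and $\ell - 1$ vertices blocked, the guarantee $e(v, B_2) \ge \ell/2$ together with the abundance of colours in $T$ (minus the at most $\ell$ already used) lets us extend; here one must be a little careful that a \emph{color} from $T$ being available at $a$ means there is an edge $ab$ with $\chi(ab)\in T$ and $b$ unblocked — a counting argument using (B2)-type bounds on how many edges share a colour handles this.

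The degree bookkeeping then goes as follows. Set $A_i^H = A_i \setminus V(M^*)$ and $B_i^H = B_i \setminus V(M^*)$. Since $M^*$ matches $\ell$ vertices of $A_1$ to $\ell$ vertices of $B_2$, we get $|A_1^H| = |A_1| - \ell = |B_1| = |B_1^H| + 0$ — more carefully, $|B_1^H| = |B_1|$ since $M^*$ avoids $B_1$, so $|A_1^H| = |A_1| - \ell = |B_1| = |B_1^H|$, giving the first part of (R5); likewise $|A_2^H| = |A_2| = |B_2| - \ell = |B_2^H|$ using (Q5)-type balance, and $||A_1^H| - |A_2^H|| \le \nu_1 n \le \nu_1 n_H \cdot (1 + o(1))$, absorbing constants. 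For (R1): a vertex $v \in A_i^H$ that was good in $G$ (i.e. outside the $\nu_1 n$ exceptional set of (Q1)/(Q3)) has $e(v, B_i) \ge n/2 - \nu_1 n$; removing the $\le \ell \le \nu_1 n$ vertices of $V(M^*) \cap B_i$ and the colours of $M^*$ costs, by (T2), at most $\eta |C_i| + \ell \le \eta \cdot n + \nu_1 n$, leaving $e_H(v, B_i^H) \ge n/2 - \nu_1 n - \eta n - \nu_1 n \ge (1 - \eta_1) n_H/2$ after choosing $\eta \ll \eta_1$ and noting $n_H = n - \ell \ge (1 - \nu_1) n$. Conditions (R2), (R4) follow the same way from (Q2), (Q4), where the slack $\nu_3 n$ vs.\ the target $\nu_2 n_H$ absorbs the losses since $\nu_2 \ll \nu_3$. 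The exceptional sets stay of size at most $\nu_1 n$.

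\textbf{Main obstacle.} The delicate point is the interaction between the greedy construction of $M^*$ and the colour-deletion guarantee of Lemma~\ref{boxes}: we must choose the candidate colour set $U$ and the greedy order so that (i) the $\ell$ colours actually used lie in the set $T$ that Lemma~\ref{boxes} returns (so the degree bound (T2) applies to precisely the deleted colours), and (ii) at every greedy step a usable edge exists despite $\le \ell$ blocked vertices and $\le \ell$ forbidden colours. Reconciling these — essentially, running Lemma~\ref{boxes} \emph{before} committing to $M^*$ but in a way that still leaves freedom to complete the matching — together with the case split from (Q9) (building from $A_1$ versus from $B_2$), is where the real work lies; the rest is routine arithmetic with the hierarchy $\mu \ll \nu_1 \ll \nu_2 \ll \nu_3 \ll \eta_1$.
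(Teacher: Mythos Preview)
Your plan assembles the right ingredients --- a greedy construction in $G[A_1,B_2]$, Lemma~\ref{boxes} to control the effect of colour deletions, and the degree bookkeeping for (R1)--(R5) --- and you have correctly put your finger on the one genuine difficulty. But the order you propose (apply Lemma~\ref{boxes} first to a pool $U$ of colours, then build $M^*$ greedily using only colours from the returned set $T$) does not work as written: nothing guarantees that the colours in $T$ sit on edges forming a matching, or even that a given vertex $a\in A_1$ has \emph{any} edge to $B_2$ whose colour lies in $T$. Your per-vertex lower bound $e(v,B_2)\ge \ell/2$ from (Q9) is also too weak to drive a greedy that has already blocked $\ell-1$ vertices on the other side.

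The paper resolves your obstacle by reversing the order. First it greedily builds a rainbow matching $M_{i^*}$ in $G[A_1,B_2]$ of size $i^*=\lfloor \ell/(10\nu_3)\rfloor=\alpha\ell$, overshooting the target by a factor $\alpha\approx 1/\nu_3$. This greedy is run on the \emph{global} edge count rather than per vertex: (Q9) gives $e(A_1,B_2)\ge (\ell/2)(n/2-\nu_1 n)$, and each step removes at most $(2\nu_3+\mu)n$ edges because (Q6)--(Q7) cap the \emph{maximum} degree in $G[A_1,B_2]$ at $\nu_3 n$ (using $\ell\ge 1$). Now set $U=\chi(E(M_{i^*}))$, a set of exactly $\alpha\ell$ colours, and apply Lemma~\ref{boxes} with $N=2n$, $\nu=\nu_3/2$, $\eta=\eta_1/2$, and $C_v$ the multiset of colours on edges incident to $v$. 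The returned $T\subseteq U$ with $|T|\ge\ell$ \emph{automatically} indexes a sub-matching $M^*\subseteq M_{i^*}$ of size $\ell$, since $M_{i^*}$ is rainbow and each colour of $T$ singles out one edge. No second greedy is needed, and (T2) applies to precisely the colour set of $M^*$. The (R1)--(R5) verification then goes through essentially as in your last paragraph.

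In short: make $U$ the colour set of an already-built large rainbow matching, so that every subset of $U$ is trivially realised by a sub-matching; this dissolves the tension between your conditions (i) and (ii). Note also that no case split on (Q9) is needed --- its only role is to supply the total edge count feeding the initial greedy.
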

\begin{proof}
We first greedily select a large rainbow matching in $G[A_1,B_2]$.
Let $E_0 = E(A_1, B_2)$ and $M_0 = \emptyset$.
By (Q5) and (Q8), note that $|E_0| \geq \frac{\ell}{2}(\frac{n}{2} - \nu_1 n)$.
For every $i\geq 1$ and while $E_{i-1} \neq \emptyset$, we arbitrarily choose $x_i=a_ib_i \in E_{i-1}$ and define the graph $M_i$ with $V(M_i)=V(M_{i-1})\cup\{a_i,b_i\}$ and $E(M_i) = E(M_{i-1}) \cup \{ x_i \}$.
We let 
$$
E_i = \{x=ab\in E_{i-1}:\, a,b\notin V(M_i),\,\chi(x)\notin \chi(E(M_i)) \}\;.
$$ 
Since $\chi$ is $\mu n$-bounded, $|A_1|-|B_1|=\ell\geq 1$ and using (Q6)-(Q7), we have  $|E_i| \geq |E_{i-1}| - (2\nu_3+\mu)n$. Let $i^*=\lfloor \ell/(10 \nu_3)\rfloor$. It follows that $E_i \neq \emptyset$,  for every $0\leq i\leq i^*$. 

We now apply Lemma \ref{boxes} with parameters $N=2n$, $\alpha = i^*/\ell$, $\nu = \nu_3 / 2$, $\eta = \eta_1/2$ and $U = \{ \chi(x) : x \in M_{i^*} \}$.
For every $v \in A \cup B$, we choose $C_v = \{ \{ \chi(x) : v \in x \} \}$ to be the multiset of colours on edges incident with vertex $v$.
By (Q2) and (Q4), we have $\nu N \leq |C_v| \leq N$, for each $v \in A \cup B$.
As each edge has two endpoints and $\chi$ is $\mu n$-bounded, then $\sum_{v \in A \cup B} m(k, C_v) \leq 2 \mu n = \mu N$.
Hence, (B1) and (B2) hold.\\
Lemma \ref{boxes} implies the existence of a set of colours $T \subseteq U$ of size $\ell$ satisfying (T1) and (T2).
Let $M^*$ be the subgraph of $M_{i^*}$ induced by the colours in $T$. Then, $M^*$ is a rainbow matching of size $\ell$. It suffices to prove that $H$, as defined in the statement, satisfies (R1)-(R5).

For each $Z \in \{ A,B \}$ and $i \in \{1,2 \}$,  let $Z_i^H=Z_i\cap V(H)$.
Property (R5) follows since $|B_1^H|=|B_1|= |A_1|-\ell = |A_1^H|$ and using (Q5).
 Then, for every $v\in Z_i^H$, we have
\begin{multline*}
e_H(v, Z_i^H) \geq |C_v \sm^+ T| - \ell \geq (1 - \eta) |C_v| - \nu_1 n \geq \\
\begin{cases}
(1 - \eta)(\frac{n}{2} - \nu_1 n) - \nu_1 n  \geq (1- \eta_1) \frac{n_H}{2} & \text{ if $v$ satisfies (Q1) or (Q3) } \\
(1 - \eta)\nu_3 n -\nu_1 n  \geq  \nu_2 n_H & \text{ if $v$ satisfies (Q2) or (Q4) }
\end{cases}
\end{multline*}
Thus $H$ satisfies (R1)-(R4), completing the proof.
\end{proof}

\subsection{Completing  the rainbow perfect matching}

Consider the  rainbow  partial matching $M^*$ and the graph $H$ provided by Lemma~\ref{middle}. Note that $H$ is vertex disjoint from $M^*$ and has no edge with colour in $\chi(E(M^*))$. Thus, the union of any rainbow perfect matching of $H$ and $M^*$ will provide a rainbow perfect matching of $G$.

We will show that $H$ satisfies the conditions of Lemma~\ref{key}, to conclude the existence of a rainbow perfect matching there. 

Of course, in order to have a rainbow perfect matching in $H$ we need to ensure the existence of at least one perfect matching. We will use the Moon-Moser condition for the existence of Hamiltonian cycles in bipartite graphs to guarantee we can find a perfect matching. 
\begin{lemma}
\label{mmthm}{(Moon, Moser \cite{mmHC})}
Let $F=(R \cup S, E)$ be a balanced bipartite graph on $2m$ vertices with $R=\{r_1,\dots,r_m\}$ and $S=\{s_1,\dots, s_m\}$ that satisfies $d(r_1) \leq \ldots \leq d(r_m)$ and  $d(s_1) \leq \ldots \leq d(s_m)$. Suppose that for every $1\leq k \leq m/2$, we have $d(r_{k}) > k $ and $d(s_{k}) > k $.
Then $F$ has a Hamiltonian cycle.
\end{lemma}

\begin{lemma}\label{lem:dense}
Let $n_H\in\bN$ and suppose $1/n_H \ll \mu \ll \eps \ll \nu_1\ll \gamma \ll \nu_2 \ll \eta \ll 1$.
Let $H=(A^H\cup B^H, E^H)$ be a bipartite graph with $|A^H|=|B^H|=n_H$ and satisfying properties (R1)-(R5). 
Consider the subgraph $H_*=(A^H\cup B^H,E^H_*)$ of $H$ with $E^H_*=E^H_{*,1}\cup E^H_{*,2}$ and, for $i\in\{1,2\}$,
$$
E^H_{*,i}=\left\{x=ab\in E(H):\, a\in A_i,\,b\in B_i \text{ and } \max\{e_H(a,B_i), e_H(b,A_i)\}\geq (1-\eta)n_H/2\right\}\;.
$$
Then, $H_*$ has at least one perfect matching.

Moreover, if $M_*$ is a perfect matching of $H_*$ and $x=a_1b_1 \in E(M_*)$, then there are at least $\gamma n_H^2$ edges of $H_*$ that are $(x,M_*)$-switchable.
\end{lemma}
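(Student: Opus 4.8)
The plan is to split $H_*$ into its two parts $H_{*,1} = H_*[A_1^H, B_1^H]$ and $H_{*,2} = H_*[A_2^H, B_2^H]$ (the definition of $E^H_{*,i}$ ensures there are no cross edges in $H_*$), and handle each separately. For the existence of a perfect matching, I would apply the Moon--Moser criterion (Lemma~\ref{mmthm}) to each $H_{*,i}$: by (R5) each is balanced, say on $2m_i$ vertices with $m_1 = |A_1^H| = |B_1^H|$ and $m_2 = |A_2^H| = |B_2^H|$, and $m_1 - m_2 \leq \nu_1 n_H$, so each $m_i$ is close to $n_H/2$. Each vertex $v \in A_i^H$ has $e_{H_*}(v, B_i^H) \geq \nu_2 n_H$ by (R2); moreover, by (R1), all but $\nu_1 n$ vertices of $A_i^H$ have $H$-degree into $B_i^H$ at least $(1-\eta_1)n_H/2$, and for such a vertex $a$ \emph{every} edge $ab$ with $b \in B_i^H$ lies in $E^H_{*,i}$, so $e_{H_*}(a, B_i^H) = e_H(a, B_i^H) \geq (1-\eta_1)n_H/2 > m_i/2$. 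Ordering degrees increasingly, the first $\nu_1 n$ or so have degree $\geq \nu_2 n_H > k$ for all $k \leq \nu_1 n \ll \nu_2 n_H$, and the remaining ones have degree $> m_i/2 \geq k$ for all $k \leq m_i/2$; so the Moon--Moser condition $d(r_k) > k$, $d(s_k) > k$ holds for all $1 \leq k \leq m_i/2$. Hence each $H_{*,i}$ has a Hamiltonian cycle, in particular a perfect matching, and their union is a perfect matching of $H_*$.

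For the switchability claim, fix a perfect matching $M_*$ of $H_*$ and an edge $x = a_1 b_1 \in E(M_*)$; since $H_*$ has no cross edges, $a_1 b_1 \in E^H_{*,i}$ for some $i$, and by definition either $e_H(a_1, B_i^H) \geq (1-\eta)n_H/2$ or $e_H(b_1, A_i^H) \geq (1-\eta)n_H/2$. Assume the former (the other case is symmetric, interchanging the roles of the two sides). I would mimic the greedy selection of a $6$-cycle $a_1 b_1 a_2 b_3 a_3 b_2$ used in the proof of Lemma~\ref{expander}. The key point is that I only need the switchable edge $y = a_3 b_3$ to lie in $E^H_{*,i}$, which is guaranteed if, say, $a_3$ is chosen among the at-most-$\nu_1 n$ exceptional vertices' complement, i.e. among vertices with $e_H(a_3, B_i^H) \geq (1-\eta_1)n_H/2$ — for such $a_3$, any incident edge into $B_i^H$ is automatically in $E^H_{*,i}$. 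Concretely: let $f = f_{M_*}$ be the matching bijection restricted to part $i$. Choose $b_2 \in N_H(a_1) \cap B_i^H$ with the additional property that $a_3 := f^{-1}(b_2)$ is non-exceptional (has $H$-degree $\geq (1-\eta_1)n_H/2$ into $B_i^H$); since $e_H(a_1, B_i^H) \geq (1-\eta)n_H/2$ and at most $\nu_1 n$ vertices of $B_i^H$ are matched to exceptional vertices, and excluding $b_1$, there are at least $(1-\eta)n_H/2 - \nu_1 n - 1 \geq n_H/3$ choices for $b_2$, hence for $a_3$. Given $a_3$, choose $b_3 \in \big(N_H(a_3) \cap f(N_H(b_1) \cap A_i^H)\big) \setminus \{b_1, b_2\}$: since $e_H(a_3, B_i^H) \geq (1-\eta_1)n_H/2$ and $e_H(b_1, A_i^H) \geq \nu_2 n_H$ (by (R4)), wait — I need $|N_H(b_1) \cap A_i^H|$ large. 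Here I should instead observe $e_H(b_1, A_i^H) \geq \nu_2 n_H$ is not enough for two near-half-size sets in $A_i^H$ to intersect substantially; so I would rather pick $a_3$ from $N_H(b_1) \cap A_i^H$ directly at the first step, and $b_3 \in N_H(a_3) \cap f(N_H(b_1)\cap A_i^H)$ at the second.

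Let me restate the cycle selection cleanly. Assume $e_H(a_1, B_i^H) \geq (1-\eta)n_H/2$. Let $W \subseteq A_i^H$ be the set of non-exceptional vertices of (R1), so $|A_i^H \setminus W| \leq \nu_1 n$ and every $a \in W$ has $N_H(a) \cap B_i^H = N_{H_*}(a) \cap B_i^H$. Step 1: pick $a_3 \in N_H(b_1) \cap W$ with $f(a_3) \neq b_1$; since $e_H(b_1, A_i^H) \geq \nu_2 n_H$ by (R4) but I want this large — actually (R3) gives all but $\nu_1 n$ vertices of $B_i^H$ have $e_H(\cdot, A_i^H) \geq (1-\eta_1)n_H/2$; if $b_1$ is one of the $\leq \nu_1 n$ exceptions, I instead run the symmetric argument from $b_1$'s side using that $e_H(a_1, B_i^H) \geq (1-\eta)n_H/2$. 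So WLOG $e_H(b_1, A_i^H) \geq (1-\eta_1)n_H/2$ too; then $|N_H(b_1) \cap A_i^H \cap W| \geq (1-\eta_1)n_H/2 - \nu_1 n \geq n_H/3$, giving $\geq n_H/3 - 1$ choices for $a_3$, and set $b_2 = f(a_3) \in B_i^H$. Step 2: pick $b_3 \in \big(N_H(a_3) \cap B_i^H\big) \cap f\big(N_H(b_1) \cap A_i^H\big) \setminus \{b_1, b_2\}$ and set $a_2 = f^{-1}(b_3)$. Both $N_H(a_3) \cap B_i^H$ (size $\geq (1-\eta_1)n_H/2$ as $a_3 \in W$) and $f(N_H(b_1)\cap A_i^H)$ (size $\geq (1-\eta_1)n_H/2$) are subsets of $B_i^H$ of size $\geq (1-\eta_1)n_H/2$, while $|B_i^H| = m_i \leq n_H/2 + \nu_1 n_H$, so their intersection has size $\geq (1 - 2\eta_1 - 2\nu_1)n_H/2 \geq n_H/3$, leaving $\geq n_H/3 - 2$ choices for $b_3$. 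Each such choice of $(a_3, b_3)$ determines the $6$-cycle $a_1 b_1 a_2 b_3 a_3 b_2$ with $a_2 b_3, a_3 b_2 \in M_*$, and $a_3 \in W$ forces the edge $y = a_3 b_3$ to lie in $E^H_{*,i} \subseteq E^H_*$; also $y \notin M_*$ as $f(a_3) = b_2 \neq b_3$. Different $(a_3,b_3)$ give different edges $y$, so we obtain at least $(n_H/3)^2 \geq \gamma n_H^2$ distinct $(x, M_*)$-switchable edges of $H_*$, as required.

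The main obstacle is bookkeeping the several "exceptional set" exclusions simultaneously — ensuring that $a_1$, $b_1$, $a_3$, $b_2$, $b_3$ all avoid the $O(\nu_1 n)$ bad vertices of (R1)/(R3) while staying inside the correct part, and in particular handling the case where the endpoint $a_1$ (or $b_1$) of the chosen matching edge $x$ is itself exceptional; this is why the definition of $E^H_{*,i}$ only requires the \emph{maximum} of the two endpoint degrees to be large, so at least one side of $x$ behaves well and the symmetric argument applies. Since $\nu_1 \ll \gamma \ll \nu_2 \ll \eta_1 \ll 1$, all these linear-in-$n$ losses are absorbed and the $6$-cycle count stays $\Omega(n_H^2)$.
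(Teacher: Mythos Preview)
Your Moon--Moser argument for the perfect matching is the same as the paper's. The switchability argument, however, has a concrete error.

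In your restated Step~1 you pick $a_3 \in N_H(b_1) \cap W$ and set $b_2 = f(a_3)$. But the $6$-cycle $a_1 b_1 a_2 b_3 a_3 b_2$ requires the edge $a_1 b_2$, i.e.\ $f(a_3) \in N_H(a_1)$; the condition $a_3 \in N_H(b_1)$ asserts instead that $b_1 a_3$ is an edge, which is \emph{not} an edge of the cycle at all. So as written the $6$-cycle need not exist. Your earlier version --- choose $b_2 \in N_H(a_1)$ and set $a_3 = f^{-1}(b_2)$ --- had the correct constraint; the rewrite lost it.

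Restoring that correct Step~1, you then need $|f(N_H(b_1))|$ large in Step~2, and you try to secure this via ``WLOG $b_1$ is non-exceptional too, else run the symmetric argument.'' This reduction is not fleshed out, and as stated is circular: the ``symmetric'' version would again need both endpoints large. The paper avoids the case split entirely by orienting the WLOG the other way: take the \emph{large} endpoint to be $b_1$, so $e_{H_*}(b_1,A_i^H)\geq(1-\eta)m$, and use only the minimum degree $e_{H_*}(a_1,B_i^H)\geq\delta m$ (with $\gamma\ll\delta\ll\nu_2$) on the $a_1$ side. Then Step~1, picking $a_3\in f^{-1}(N_{H_*}(a_1))\cap W$, yields merely $\geq\delta m/2$ choices, but Step~2 intersects the two size-$\geq(1-\eta)m$ sets $N_{H_*}(a_3)$ and $f(N_{H_*}(b_1))$ inside $B_i^H$ of size $m$, giving $\geq(1-3\eta)m$ choices. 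The product $(\delta m/2)(1-3\eta)m\geq\gamma n_H^2$ since $\gamma\ll\delta$. Your route --- spending $a_1$'s large degree in Step~1 and hoping $b_1$ is also large for Step~2 --- fails precisely when $b_1$ is exceptional, because then the Step~2 intersection bound is $\nu_2 n_H-\eta n_H/2-O(\nu_1 n_H)<0$ (recall $\nu_2\ll\eta$).
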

\begin{proof}
It is easy to check that $H_*$ has only two connected components. 
We will show that it is true in $H_1=H_*[A^H_1,B^H_1]$ and the same argument also applies to $H_*[A^H_2,B^H_2]$.
Note that $H_1$ is a balanced bipartite graph on $2m$ vertices for some $m \in (n_H (1/2 - \nu_1), n_H (1/2 +\nu_1))$.

 We will use the Moon-Moser condition (Lemma~\ref{mmthm}) to show the existence of a Hamiltonian cycle in $H_1$.  Let $A^H_1=\{r_1,\dots, r_{m}\}$ and $B^H_1=\{s_1,\dots, s_{m}\}$ with $d(r_1)\leq \dots \leq d(r_m)$ and $d(s_1)\leq \dots \leq d(s_m)$.

If $1\leq k\leq 2\nu_1 n_H < 5 \nu_1 m -1$, then, by (R2), $d(r_k) \geq \nu_2 n_H \geq 5\nu_1 m > k$, so there is nothing to prove. If $2\nu_1 n_H \leq k \leq m/2$, then, by (R1), $d(r_k)\geq (1-\eta)m > k$. An identical argument works for $s_k$ using (R3) and (R4). Thus we satisfy the Moon-Moser condition.
So, $H_1$ has a Hamiltonian cycle, which implies the existence of a perfect matching.

Let $M_*$ be a perfect matching of $H_*$. Consider the bijective map $f_{M^*}:A^H\to B^H$ defined as $f(a)=b$ if and only if $ab\in M_*$.
Let $x = a_1b_1 \in M_*$, and, without loss of generality, assume that $a_1 \in A_1^H$, so $b_1\in B_1^H$. In order to prove the second part of the lemma, we need to show that there are many edges $y=ab$ that are $(x, M_*)$-switchable.

Let $0<\delta<1$ such that $\gamma\ll\delta\ll \nu_2$. Observe that the minimum degree in $H_*$ is at least $(\nu_2-\nu_1)n_H\geq \delta m$.
By construction, there is no pair of vertices both of degree less than $(1- \eta)m$ that are connected by an edge in $H_*$.
Thus, without loss of generality, we may assume that $e_{H_*}(a_1,B_1^H) \geq \delta m$ and that $e_{H_*}(b_1,A_1^H) \geq (1- \eta)m$.



Since $|f_{M_*}^{-1}(N_{H_*}(a_1))| \geq \delta m$  and since there are at most 
$2\nu_1 m$ vertices of degree less than $(1-\eta)m$, there are at least 
$\delta m/2$ choices for $a\in f_{M_*}^{-1}(N_{H_*}(a_1)\sm \{b_1\})$ that satisfies $e_{H_*}(a, B_1^H)\geq (1-\eta)m$. 

Fix such a vertex $a$ and note that
$$
e_{H_*}\left(a,B_1^H \sm f_{M_*}(N_{H_*}(b_1)\sm\{a_1,a\})\right) \leq |B_1^H|-(1- \eta)m+2 \leq  \eta m+2\;.
$$
Therefore, 
\begin{align*}
e_{H_*}\left(a,f_{M_*}(N_{H_*}(b_1)\sm \{a_1,a\})\right) 
&= e_{H_*}(a,B_1^H) - e_{H_*}\left(a,B_1^H \sm f_{M_*}(N_{H_*}(b_1)\sm\{a_1,a\})\right) \\
&\geq  (1-\eta)m -  (\eta m+2) \\
&\geq (1-3\eta) m\;.
\end{align*}
Thus, there are at least $(1-3\eta)m$ choices for $b\in f_{M_*}(N_{H_*}(b_1)\sm \{a_1,a\})$ with $ab\in E(H_*)$. It follows that there are at least $(\delta m/2)(1-3\eta)m\geq \gamma n_H^2$ choices of an edge $y=ab\in E(H_*)$ such that there exists a $6$-cycle that contains $x$, $y$ and two other edges of $M_*$. We conclude that there are at least $\gamma n_H^2$ edges of $H_*$ that are $(x, M_*)$-switchable. 

\end{proof}
The following corollary follows directly from the application of Lemma~\ref{key}, Lemma~\ref{middle} and Lemma~\ref{lem:dense}.
\begin{corollary}
\label{RPMextremal}
Let $n\in \bN$ and suppose $1/n \ll \mu \ll \eps \ll 1$.
Let $G = (A \cup B, E)$ be an $\eps$-extremal Dirac bipartite graph on $2n$ vertices.
Then, any $\mu n$-bounded edge colouring of $G$ contains a rainbow perfect matching.
\end{corollary}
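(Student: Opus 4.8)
The plan is to assemble Corollary~\ref{RPMextremal} directly from the three extremal ingredients developed in this section, using the switching machinery of Lemma~\ref{key}. First I would fix the hierarchy $1/n \ll \mu \ll \eps \ll 1$ and introduce intermediate constants $\nu_1 \ll \nu_2 \ll \nu_3 \ll \eta_1 \ll \gamma \ll \eta \ll 1$ compatible with all the lemmas to be invoked, so that every ``$\ll$'' hypothesis below is met. Since $G$ is $\eps$-extremal and Dirac, the superextremality lemma gives partitions $A = A_1 \cup A_2$, $B = B_1 \cup B_2$ making $G$ a $(\nu_1,\nu_3)$-superextremal graph (I apply it with $\nu_1$ playing the role of both the small parameter and with $\nu_3$ in the role of the lemma's $\nu_2$; the statement is stated with $\eps \ll \nu_1 \ll \nu_2 \ll 1$, so I just rename).

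Next I would feed this superextremal graph, together with the given $\mu n$-bounded colouring $\chi$, into Lemma~\ref{middle}. Setting $\ell := |A_1| - |B_1| \ge 0$ (by (Q8)), this produces a rainbow matching $M^*$ of size $\ell$ lying in $G[A_1,B_2]$, together with the residual graph $H = (A^H \cup B^H, E^H)$ on $2n_H$ vertices (with $n_H = n - \ell$) obtained by deleting $V(M^*)$ and all edges whose colour appears in $\chi(E(M^*))$, and balanced partitions $A^H = A_1^H \cup A_2^H$, $B^H = B_1^H \cup B_2^H$ satisfying (R1)--(R5). The key structural feature I need is exactly what (R1)--(R5) encode: $H$ splits (up to few edges) into two balanced, essentially-complete bipartite blocks, and the colour deletion only shaved off an $\eta_1$-fraction of each vertex's degree.

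Then I would pass to the subgraph $H_*$ of Lemma~\ref{lem:dense}, keeping only within-block edges at least one of whose endpoints has within-block degree at least $(1-\eta)n_H/2$. Lemma~\ref{lem:dense} tells me two things: $H_*$ has at least one perfect matching, and for every perfect matching $M_*$ of $H_*$ and every $x \in E(M_*)$ there are at least $\gamma n_H^2$ edges of $H_*$ that are $(x,M_*)$-switchable. (I should double-check that $\mu n$ is still a valid bound relative to $n_H$: since $n_H = n - \ell \ge n/2$ say, a $\mu n$-bounded colouring is $2\mu n_H$-bounded, and $2\mu$ still sits below the required threshold — this is the only genuinely routine sanity check.) Now Lemma~\ref{key} applies to $H_*$ with $\gamma$ in the role of its $\gamma$ and $2\mu$ (say $\mu'$) in the role of its $\mu$, since $1/n_H \ll \mu' \ll \gamma$: the probability that a uniformly random perfect matching of $H_*$ is conflict-free (hence rainbow, taking the transitive conflict system $\cF_\chi$ restricted to $E(H_*)$) is at least $e^{-(\mu')^{1/2} n_H} > 0$. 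Therefore $H_*$, and a fortiori $H$, contains a rainbow perfect matching $M_H$. Finally $M^* \cup M_H$ is a perfect matching of $G$; it is rainbow because $M^*$ is rainbow, $M_H$ is rainbow, and no colour of $M_H$ appears in $\chi(E(M^*))$ by construction of $E^H$. This is the claimed rainbow perfect matching.

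The main obstacle is not any single step but the bookkeeping of the constant hierarchy across four separate lemmas, each with its own chain of ``$\ll$'' assumptions, and in particular verifying that the parameters supplied to Lemma~\ref{boxes} inside Lemma~\ref{middle} are consistent with the outer choice $\mu \ll \eps$ — but that consistency is already guaranteed by the statements as proved, so here it reduces to choosing the intermediate constants in the right order. A secondary subtlety worth a sentence is confirming that ``conflict-free for $\cF_\chi$'' is the same as ``rainbow'', which is exactly the observation recorded just before Theorem~\ref{conflicts}, and that restricting a $\mu n$-bounded colouring of $G$ to the subgraph $H_*$ keeps it $\mu n$-bounded (trivially) and that all the $n$-versus-$n_H$ rescalings only cost constant factors absorbed into the hierarchy.
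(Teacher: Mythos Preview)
Your approach is correct and is exactly the one the paper indicates: it states that the corollary ``follows directly from the application of Lemma~\ref{key}, Lemma~\ref{middle} and Lemma~\ref{lem:dense}'', and your write-up spells this out (including the superextremality step and the $n$-versus-$n_H$ rescaling) in precisely the intended way. The only slip is in your constant hierarchy: Lemma~\ref{lem:dense} requires $\gamma \ll \nu_2$, so $\gamma$ must sit between $\nu_1$ and $\nu_2$ (e.g.\ $\mu \ll \eps \ll \nu_1 \ll \gamma \ll \nu_2 \ll \nu_3 \ll \eta_1 \leq \eta \ll 1$) rather than above $\eta_1$ as you wrote; with that reorder everything goes through.
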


\section{Proofs of Theorem~\ref{mainthm} and Theorem~\ref{conflicts}}\label{sec:final}
We finally prove our main theorems.
\begin{proof}[Proof of Theorem~\ref{mainthm}]
Let $G$ be a  Dirac bipartite graph on $2n$ vertices and suppose $1/n \ll \mu \ll \eps \ll \nu \ll \tau \ll 1$.
Consider a $\mu n$-bounded edge colouring $\chi$ of $G$.
By Lemma~\ref{dichotomy}, the graph $G$ is either $\eps$-extremal or a bipartite robust $(\nu, \tau)$-expander.
If $G$ is a bipartite robust $(\nu, \tau)$-expander, then $G$ has a rainbow perfect matching by Corollary~\ref{RPMexpander} with $\cF=\cF_\chi$.
If $G$ is an $\eps$-extremal graph, then $G$ has a rainbow perfect matching by Corollary~\ref{RPMextremal}.
\end{proof}

\begin{proof}[Proof of Theorem~\ref{conflicts}]
Let $G = (A \cup B, E)$ be a balanced bipartite graph on $2n$ vertices with minimum degree at least $(1/2 + \eps) n$. 
Suppose that  $1/n \ll \mu \ll \eps  \ll 1$. We will show that $G$ is a bipartite robust $(\eps/8, 1/4)$-expander. Let $X$ be a subset of either $A$ or $B$ with $n/4\leq |X|\leq 3n/4$; without loss of generality, we may assume that $X \subseteq A$.
By the minimum degree condition we have $e(X, B) \geq  (1/2 + \eps) n |X|$ and, by the definition of robust neighbourhood, we have $e(X, B) \leq |X| |RN_{\eps/8}(X)| + \eps n (n - |RN_{\eps/8}(X)|)/8$.
Combining these inequalities yields $|X||RN_{\eps/8}(X)| + \eps n^2/8 \geq (1/2 + \eps) n |X|$ and, as $|X| \geq n/4$, upon rearrangement, we have that $|RN_{\eps/8}(X)| \geq (1/2 + \eps/2)n$. 
If $n/4\leq |X| \leq n/2$, then $|RN_{\eps/8}(X)| \geq |X| + \eps n/8$ and we are done. If  $n/2 \leq |X| \leq 3n/4$, by the minimum degree condition, each $v \in B$ has at least $\eps n$ neighbours in $X$.
Thus $RN_{\eps/8}(X) = B$ and  $|RN_{\eps/8}(X)| = n \geq |X| + \eps n/8$. So $G$ is a bipartite robust $(\eps/8, 1/4)$-expander. Corollary~\ref{RPMexpander} completes the proof.
\end{proof}

The following proposition shows that $\mu\leq 1/4$~(see Section~\ref{sec:fur_rem} for a discussion).
\begin{proposition}
For every $t\in \mathbb{N}$, there exists a Dirac bipartite graph $G$ on $n=4t(t+1)$ vertices and a $\left(\frac{t+1}{4t}\,n\right)$-bounded edge colouring of $G$ such that $G$ does not contain a rainbow perfect matching.
\end{proposition}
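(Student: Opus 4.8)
The plan is to give an explicit example. Fix $t\geq 1$ and set $n=4t(t+1)$, so that $n/2=2t(t+1)$ and $\tfrac{t+1}{4t}\,n=(t+1)^2$. Take the bipartition $A\cup B$ with $A=A_1\cup A_2$ and $B=B_1\cup B_2$, where $|A_1|=|B_2|=(2t+1)(t+1)$ and $|A_2|=|B_1|=(2t-1)(t+1)$ (these sum to $n$ on each side). I would let $G$ be complete between every pair of parts, with a single exception: between $A_1$ and $B_2$, partition each of $A_1$ and $B_2$ into $2t+1$ cells of size $t+1$, fix a bijection between the cells of $A_1$ and the cells of $B_2$, and join each cell of $A_1$ completely to its partner cell in $B_2$ and to nothing else. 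Thus $G[A_1,B_2]$ is a vertex-disjoint union of $2t+1$ copies of $K_{t+1,t+1}$. A one-line degree count then gives that every $v\in A_1$ has degree $|B_1|+(t+1)=2t(t+1)=n/2$, every $v\in B_2$ has degree $(t+1)+|A_2|=2t(t+1)=n/2$, and every vertex of $A_2\cup B_1$ has degree $n$; hence $G$ is a Dirac bipartite graph on $2n$ vertices.

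Next I would colour $G$ by giving each of the $2t+1$ bicliques of $G[A_1,B_2]$ a private colour (each such class has exactly $(t+1)^2$ edges) and giving every remaining edge of $G$ a fresh colour of its own; this colouring is $(t+1)^2$-bounded, i.e.\ $\tfrac{t+1}{4t}\,n$-bounded. To conclude, I claim there is no rainbow perfect matching. Let $M$ be a perfect matching of $G$. At most $|B_1|$ edges of $M$ lie inside $G[A_1,B_1]$ (each uses a distinct vertex of $B_1$), so at least $|A_1|-|B_1|=2(t+1)$ vertices of $A_1$ are matched by $M$ into $B_2$; that is, $M$ contains at least $2(t+1)$ edges of $G[A_1,B_2]$. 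But the edges of $G[A_1,B_2]$ carry only $2t+1<2(t+1)$ distinct colours, so two of these edges of $M$ share a colour and $M$ is not rainbow.

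The only genuinely delicate point, and where I expect the real work to lie, is the choice of the part sizes. One wants a ``deficiency'' set $A_1$ whose imbalance $\ell=|A_1|-|B_1|$ — the number of $A_1$–$B_2$ edges that every perfect matching is forced to use — strictly exceeds the number of colours that $E(A_1,B_2)$ can possibly need, namely $\lceil e(A_1,B_2)/(t+1)^2\rceil$. The Dirac condition pulls hard in the other direction: from the $A_1$-side alone it forces $e(A_1,B_2)\geq |A_1|\,(n/2-|B_1|)$, and a short computation shows that as long as $|A_1|\leq n/2$ this lower bound already gives $e(A_1,B_2)\geq \ell\,(t+1)^2$, so the pigeonhole step cannot work. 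One is therefore pushed to take $|A_1|$ slightly above $n/2$, which in turn forces each vertex of $B_2$ to send $|A_1|-n/2$ edges into $A_1$; balancing both minimum-degree demands with equality at every vertex pins down the symmetric sizes used above and makes $E(A_1,B_2)$ split into exactly $\ell-1=2t+1$ bicliques of size $(t+1)^2$, which is precisely what the argument needs.
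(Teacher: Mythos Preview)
Your construction and argument are correct and essentially identical to the paper's. Up to swapping the labels $1\leftrightarrow 2$ on the parts, the paper builds the same graph: two large parts of size $(2t+1)(t+1)$ linked by $2t+1$ vertex-disjoint monochromatic copies of $K_{t+1,t+1}$, two small parts of size $(2t-1)(t+1)$, and complete bipartite graphs from each large part to the small part on the other side; the pigeonhole count $|A_1|-|B_1|=2(t+1)>2t+1$ is the same. The only cosmetic differences are that the paper leaves the edge set between the two \emph{small} parts empty (you make it complete, which is harmless), and the paper colours the remaining edges in blocks of size $(t+1)^2$ rather than giving each its own colour; neither affects the proof.
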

\begin{proof}
Let $m=2t$. Consider the bipartite graph $G=(A\cup B,E)$ constructed as follows. The vertex set is partitioned into  $A=A_1\cup A_2$ and $B=B_1\cup B_2$, with 
\begin{align*}
A_1&=\{A_1^1,\dots, A_1^{m-1}\}\\
A_2&=\{A_2^1,\dots, A_2^{m+1}\}\\
B_1&=\{B_1^1,\dots, B_1^{m+1}\}\\
B_2&=\{B_2^1,\dots, B_2^{m-1}\}\;,
\end{align*}
where $|A_k^i|=|B_k^i|=t+1$. The edge set of $G$ is consists of two complete bipartite graphs induced by $G[A_1,B_1]$ and $G[A_2,B_2]$, and of $m+1$ smaller complete bipartite graphs induced by $G[A_2^i,B_1^i]$, for $i\in [m+1]$. Note that $G$ is a Dirac bipartite graph.

Consider the edge colouring that assigns colour $c_{k,\ell}^{i,j}$ to the edges in $G[A_k^i,B_\ell^j]$. Since each set has size $t+1$, the colouring is $(t+1)^2=\left(\frac{t+1}{4t}\,n\right)$-bounded.

Suppose that $G$ admits a rainbow perfect matching $M$. Note that $M$ contains at most $m+1$ edges in $G[A_2,B_1]$. Otherwise there exists $i\in[m+1]$ such that $M$ contains two edges in $E[A_2^i,B_1^i]$, contradicting the fact that it is rainbow, since both edges have colour $c_{2,1}^{i,i}$. Since all the edges incident to $A_1$ are also incident to $B_1$, we must have $|A_1|\geq |B_1|-(m+1)$. However
$$
|A_1| = (m-1)(t+1) = (m+1)(t+1)-2(t+1)=|B_1|-(m+2),
$$
a contradiction. We conclude that, $G$ has no rainbow perfect matching.

\end{proof}

\section{Applications}\label{sec:appl}
In the following section we provide some applications of our main theorems on the existence of rainbow spanning subgraphs of graphs with large minimum degree that are not necessarily bipartite. 

We first discuss the existence of rainbow $\Delta$-factors in Dirac graphs for a wide range of $\Delta$.  Recall that a \emph{Dirac graph} on $n$ vertices is a graph with minimum degree at least $n/2$.
The existence of $(n/2)$-factors in Dirac graphs was proved by Katerinis~\cite{kfactorexists}. 
Our next result extends Theorem~\ref{mainthm} to $\Delta$-factors of Dirac graphs.

%
%

\begin{theorem}
\label{rainbowkfactor}
There exist $\mu > 0$ and $n_0 \in \bN$ such that if $n \geq n_0$ and for every even $1 \leq \Delta \leq \mu n$ the following holds. Let $G$ be a Dirac graph on $n$ vertices, then any $(\mu n/\Delta)$-bounded colouring of $G$ contains a rainbow $\Delta$-factor.
\end{theorem}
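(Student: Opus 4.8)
The plan is to reduce the problem about $\Delta$-factors of a Dirac graph $G$ to the bipartite rainbow perfect matching result, Theorem~\ref{mainthm}, applied to a suitable auxiliary bipartite graph. The classical way to find a $\Delta$-factor in a graph with large minimum degree is to pass to the bipartite incidence-type graph: form $G' = (A \cup B, E')$ where $A$ and $B$ are each copies of $V(G)$, and join $a \in A$ to $b \in B$ whenever $ab \in E(G)$ (and, if $\Delta$ is large, one takes $\Delta$ parallel structures or uses the ``blow-up'' where each vertex is split into $\Delta/2$ or $\Delta$ clones). A perfect matching in the appropriate blow-up corresponds to a $\Delta$-regular spanning subgraph of $G$, essentially because a $\Delta$-regular bipartite-type structure on $V(G)$ symmetrises to a $\Delta$-factor; this is the standard Tutte/Belck-type reduction used e.g.\ by Katerinis~\cite{kfactorexists} for $(n/2)$-factors. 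First I would fix this correspondence precisely: replace each vertex $v$ by $\Delta/2$ clones on each side (so the new graph has $n\Delta/2$ vertices per side), joining clones of $u$ to clones of $v$ iff $uv \in E(G)$; then the Dirac condition $\delta(G) \geq n/2$ becomes a Dirac condition $\delta(G') \geq (n\Delta/2)/2$ on the $2 \cdot (n\Delta/2)$-vertex bipartite graph $G'$, since a clone of $v$ has $\Delta/2 \cdot \deg_G(v) \geq \Delta/2 \cdot n/2 = (n\Delta/2)/2$ neighbours.

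Next I would transfer the colouring. A colour class of $\chi$ on $E(G)$ of size at most $\mu n / \Delta$ becomes, after cloning, a set of at most $(\Delta/2)^2 \cdot \mu n/\Delta = \mu n \Delta / 4$ edges in $G'$; writing $N' = n\Delta/2$ for the part size of $G'$, this is $(\mu/2) N'$, so the induced colouring of $G'$ is $(\mu/2)N'$-bounded, which is $\mu' N'$-bounded for $\mu' = \mu/2$ small enough to apply Theorem~\ref{mainthm}. (One must be slightly careful that each clone-edge inherits the colour of its $G$-preimage and that the symmetrisation step below does not create repeated colours in the resulting $\Delta$-factor — here rainbowness is automatic because distinct edges of the $\Delta$-factor come from distinct edges of $G$, which carry distinct colours.) Applying Theorem~\ref{mainthm} to $G'$ with this colouring yields a rainbow perfect matching $M'$ of $G'$.

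The remaining step is to read off a rainbow $\Delta$-factor of $G$ from $M'$. Projecting $M'$ down to $V(G)$ gives, for each vertex $v$, exactly $\Delta/2$ edges leaving the clones of $v$ on side $A$ and $\Delta/2$ edges entering the clones of $v$ on side $B$, i.e.\ $v$ is ``out-matched'' $\Delta/2$ times and ``in-matched'' $\Delta/2$ times; the multiset of projected edges is a $\Delta$-regular multigraph on $V(G)$, and because distinct matching edges project to distinct edges of $G$ (a subtle point needing the clones to be set up so that at most one clone-edge sits over each edge of $G$, which one arranges by an appropriate orientation/indexing of clones, or by first reducing to $\Delta$ even and halving), this multigraph is in fact a simple $\Delta$-factor $F$ of $G$. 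Since the colours on $E(M')$ are pairwise distinct and are exactly the colours on $E(F)$, the factor $F$ is rainbow. The main obstacle is the bookkeeping in this last reduction: making sure the clone construction produces a genuinely \emph{simple} $\Delta$-regular subgraph rather than a multigraph (this is where evenness of $\Delta$ and a careful pairing of the $\Delta/2$ ``out-clones'' with $\Delta/2$ ``in-clones'' of each vertex is used, in the spirit of the standard degree-constrained-subgraph-to-matching reductions), and verifying that no two edges of the perfect matching can project to the same edge of $G$ — everything else (the Dirac condition, the boundedness of the colouring) transfers by the routine counting above.
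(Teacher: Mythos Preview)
Your construction is exactly the one the paper uses: clone each vertex $\Delta/2$ times on each side, inherit edges and colours from $G$, apply Theorem~\ref{mainthm} to the resulting Dirac bipartite graph on $2N$ vertices with $N=n\Delta/2$, and project the rainbow perfect matching back to $G$. Two comments.

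First, a minor miscount: each edge $vw\in E(G)$ lifts to $2(\Delta/2)^2$ clone-edges, not $(\Delta/2)^2$, since clones of $v$ can sit in $A$ with clones of $w$ in $B$ \emph{or} vice versa. This gives colour classes of size at most $\mu N$ rather than $(\mu/2)N$; harmless, but worth getting right.

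Second, and more to the point, the step you flag as ``the main obstacle'' --- that the projected subgraph be \emph{simple} --- has a one-line resolution you are missing, and your suggested workaround via ``orientation/indexing of clones'' is both unnecessary and unlikely to preserve the Dirac condition on $G'$. The observation is simply that any two clone-edges lying over the same edge $vw$ of $G$ carry the \emph{same} colour $\chi(vw)$. Since $M'$ is rainbow, it contains at most one of them; hence the projection $M'\to E(G)$ is injective, the image has no multi-edges, and the $\Delta$ matching edges meeting the clones of $v$ project to $\Delta$ distinct $G$-edges at $v$. This is precisely how the paper argues. The same observation also gives rainbowness of the factor directly (two factor-edges of the same colour would lift to two $M'$-edges of the same colour), so your parenthetical ``distinct edges of $G$, which carry distinct colours'' --- false in general for a merely bounded colouring --- is not needed and not the right reason.
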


Note that this theorem is tight in its dependence on $n$ and $\Delta$ as a $\Delta$ factor contains $n \Delta/2$ edges.
\begin{proof}
We construct an auxiliary bipartite graph $Q=(V(Q),E(Q))$ as follows.
The vertex set is $V(Q) = A\cup B$, where $A= \{ u_{v,i} :v \in V(G), 1\leq i \leq \Delta/2 \}$ and $B= \{ u_{v,i} :v \in V(G), \Delta/2< i \leq \Delta \}$.
The edge set is defined as 
$$
E(Q) = \{ u_{v,i} u_{w,j} :\, u_{v,i}\in A,\, u_{w,j}\in B \text{ and } vw \in E(G) \}\;.
$$
Note that $Q$ is a bipartite Dirac graph on $2N=\Delta n$ vertices. Let $\chi:E(G)\to \mathbb{N}$ be a $\mu n$-bounded edge colouring of $G$. Construct the edge colouring $\chi_Q: E(Q)\to \mathbb{N}$ defined by $\chi_Q(u_{v,i} u_{w,j})=\chi(vw)$, for every $u_{v,i} u_{w,j}\in E(Q)$.
Since $2 \cdot (\Delta/2)^2\cdot \mu n/\Delta   =  \mu N$, the colouring is $\mu N$-bounded.
Thus, by Theorem~\ref{mainthm}, $Q$ has a rainbow perfect matching $M$.

Consider the subgraph $H=(V(H), E(H))$ of $G$ with $V(H)=V(G)$ and edge set 
$$
E(H) = \{ vw \in E(G) :\, \text{there exist $1\leq i\leq \Delta/2<j\leq \Delta$ such that }u_{v,i} u_{w,j} \in E(M)\}\;.
$$
We claim that $H$ is a rainbow $\Delta$-factor of $G$. Since $u_{v,i} \in V(Q)$ for every $i \in [\Delta]$ and $M$ is a perfect matching of $Q$, we have $d_H(v) = \Delta$. Since $u_{v,i} u_{v,j} \not \in E(Q)$ for every $i,j\in [\Delta]$, $H$ has no self loops. Finally, since $M$ is a rainbow perfect matching of $Q$, and by definition of the colouring $\chi$, $H$ has no multiple edges and each colour in $\chi$ appears at most once in $M$. Thus, $H$ is a simple rainbow $\Delta$-regular spanning subgraph of $G$.
\end{proof}

Our second corollary concerns bipartite subgraphs of graphs with large minimum degree. Consider two graphs $G$ and $H$ on $n$ vertices  with $\Delta(H)\leq \Delta$. The Bollob\'as-Eldridge-Catlin conjecture~\cite{bollobas1978packings,catlin1974subgraphs}, states that if $\delta(G)\geq\left(1-1/(\Delta+1)\right)n -1/(\Delta+1)$, then $G$ contains a copy of $H$. Sauer and Spencer~\cite{sauerspencer} showed that the conjecture holds if $\delta(G)\geq \left(1-1/2\Delta\right) n -1$. 
This result has been improved for large values of $\Delta$~\cite{kaul2008graph}.
The existence of rainbow copies of $H$ in $k$-bounded edge colourings of $K_n$ was studied in~\cite{bottcher2012properly}, provided that $k=O(n/\Delta^2)$. In~\cite{sudakov2017properly}, it was observed that similar techniques allow to replace $K_n$ by a graph $G$ with $\delta(G)\geq \left(1-c/\Delta\right) n$, for a sufficiently small constant $c>0$.

Our last result partially extends the result in~\cite{bottcher2012properly} at the Sauer-Spencer minimum degree threshold.
\begin{theorem}
\label{ss_corr}
For every $\gamma>0$ there exists $\mu>0$ such that for every $\Delta\in \mathbb{N}$ there exists $n_0\in \mathbb{N}$ such that for every even $n\geq n_0$ the following holds.
Let $G$ be a graph on $n$ vertices with $\delta(G) \geq \left(1-1/2\Delta+\gamma\right)n$ and let $H$ be a balanced bipartite graph on $n$ vertices with $\Delta(H) \leq \Delta$, then any proper $(\mu n/\Delta^2)$-bounded edge colouring of $G$ contains a rainbow copy of $H$.
\end{theorem}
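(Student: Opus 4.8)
The plan is to reduce Theorem~\ref{ss_corr} to Theorem~\ref{conflicts} by encoding an embedding of $H$ into $G$ as a perfect matching in an auxiliary balanced bipartite graph, and encoding both the properness of $H$ and the rainbow condition as a bounded system of conflicts. First I would fix $\gamma>0$, set $\eps=\gamma/2$ and choose $\mu$ small relative to $\gamma$ as supplied by Theorem~\ref{conflicts}; then given $\Delta$ and an even $n$, build the bipartite ``assignment graph'' $Q=(V(H)\cup V(G), E(Q))$ where we put an edge between $x\in V(H)$ and $v\in V(G)$ precisely when $x$ can be mapped to $v$ without immediate obstruction. A perfect matching of $Q$ is then a bijection $\phi:V(H)\to V(G)$, and we want to choose $Q$ so that \emph{every} perfect matching is automatically a graph embedding (i.e.\ $xy\in E(H)$ forces $\phi(x)\phi(y)\in E(G)$). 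This cannot be done by restricting $E(Q)$ alone, so instead I would keep $Q$ complete bipartite and move the embedding requirement into the conflict system: for each non-edge $uv\notin E(G)$ and each edge $xy\in E(H)$, declare the pair of $Q$-edges $\{xu, yv\}$ (and $\{xv,yu\}$) to be a conflict. A conflict-free perfect matching of $Q$ is then exactly an embedding of $H$ into $G$.

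Next I would add the colour and properness conflicts. Since $H$ is properly edge-coloured target-side is irrelevant here; what we need is that the embedded copy is rainbow \emph{and} that the copy we find is a genuine copy of $H$ with the colouring pulled back being proper in $G$ only in the sense required. Concretely, list the edges of $H$ as $f_1,\dots,f_{|E(H)|}$; an embedding $\phi$ sends $f_t=x_ty_t$ to $g_t=\phi(x_t)\phi(y_t)\in E(G)$, and rainbowness means $\chi(g_s)\ne\chi(g_t)$ for $s\ne t$. The subtlety is that ``$g_t$'' is not a single $Q$-edge but a pair of matched $Q$-edges, so a colour-clash is a property of a $4$-tuple of $Q$-edges, not a pair. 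To squeeze this into a \emph{pairwise} conflict system I would instead set up $Q$ differently: let one side be $V(G)$ and the other side be $V(H)$, but think of a perfect matching together with the fixed edge set $E(H)$ as determining, for each $f\in E(H)$, the image edge; then for colour control, observe that it suffices to forbid, for each colour $c$ and each pair of edges $f,f'\in E(H)$, the simultaneous choice of a $G$-edge of colour $c$ as the image of $f$ and another as the image of $f'$. Since fixing the image of $f=xy$ amounts to fixing both matching edges at $x$ and at $y$, a colour conflict between $f$ and $f'$ translates to a conflict between the $Q$-edge at $x$ (say) and the $Q$-edge at the corresponding endpoint of $f'$, for each of the boundedly many ways colours and incidences line up. Counting: a fixed $Q$-edge $xv$ is incident to $x\in V(H)$, which lies in at most $\Delta$ edges of $H$; for each such $f\ni x$ the image edge is forced to be incident to $v$, there are $\le n$ such edges in $G$, at most $\mu n/\Delta^2$ share each colour, and each such image edge forces the matching edge at the other endpoint of $f$; rolling these up gives each $Q$-edge in at most $O(\Delta\cdot(\text{const}))$-ish conflicts from the embedding part plus $O(\Delta\cdot \mu n/\Delta^2 \cdot \Delta)=O(\mu n)$ from the colour/properness part. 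So the system is $O(\mu n)$-bounded, which is $\mu' N$-bounded for $N=n$ after adjusting constants.

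Finally I would verify the minimum-degree hypothesis of Theorem~\ref{conflicts} for $Q$. With $Q$ complete bipartite on $n+n$ vertices this is trivially satisfied ($\delta(Q)=n\ge(1/2+\eps)n$), but then the embedding conflicts are not enough on their own to be ``bounded'' unless $G$ is dense enough that few pairs are non-edges: a fixed $Q$-edge $xv$ participates, for each $f\ni x$ (at most $\Delta$ of them) and each non-neighbour $u\notin N_G(v)$ (at most $(1/2\Delta-\gamma)n$ of them), in one embedding conflict, giving at most $\Delta\cdot(1/(2\Delta)-\gamma)n\le n/2$ such conflicts per edge --- \emph{not} $O(\mu n)$. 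This is the main obstacle: the naive reduction produces a $\Theta(n)$-bounded conflict system, far too large. The fix is to \emph{not} use the complete bipartite $Q$; instead I would invoke the Sauer--Spencer-type margin more cleverly, building $Q$ as a sufficiently-dense-but-not-complete bipartite graph whose perfect matchings are already embeddings (using that $\delta(G)\ge(1-1/(2\Delta)+\gamma)n$ lets each $x\in V(H)$ retain $\ge(1/2+\gamma')n$ admissible images even after the boundedly many neighbours of $x$ in $H$ are placed --- this is exactly the Sauer--Spencer defect argument, which I would run greedily/probabilistically to restrict $E(Q)$ so that $\delta(Q)\ge(1/2+\eps)n$ while guaranteeing matchings are valid embeddings), leaving only the colour/properness conflicts, which are genuinely $O(\mu n/\Delta^2\cdot\Delta^2)=O(\mu n)$-bounded. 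Making this restriction of $Q$ simultaneously (i) preserve $\delta(Q)\ge(1/2+\eps)n$ and (ii) force every perfect matching to be an embedding is the technical heart; I expect it to follow from a short argument showing that for each pair $\{x,v\}$ with $xv\notin E(Q)$ one may safely delete it because $v$ still has many $Q$-neighbours, iterating while a Hall-type / minimum-degree invariant holds. Once $Q$ and the $O(\mu n)$-bounded conflict system $\cF$ are in place, Theorem~\ref{conflicts} yields a conflict-free perfect matching, which unwinds to the desired rainbow copy of $H$ in $G$, completing the proof.
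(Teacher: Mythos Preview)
Your overall plan---reduce to Theorem~\ref{conflicts} via an auxiliary balanced bipartite graph $Q$ together with a bounded conflict system---is exactly the paper's, and you correctly diagnose why taking $Q=K_{V(H),V(G)}$ and encoding ``is an embedding'' as conflicts fails (it yields a $\Theta(n)$-bounded system). The genuine gap is in your proposed fix. You hope to prune $E(Q)$ so that every perfect matching of $Q$ is already a graph embedding while keeping $\delta(Q)\ge(1/2+\eps)n$. But ``every perfect matching is an embedding'' means that for each $xy\in E(H)$ one has $N_Q(x)\times N_Q(y)\setminus\{\text{diagonal}\}\subseteq E(G)$; chaining this along a connected $H$ forces large near-complete (bipartite) pieces inside $G$, which the hypothesis $\delta(G)\ge(1-1/(2\Delta)+\gamma)n$ does not provide (think of $G$ random at the right density). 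The Sauer--Spencer slack lets you place vertices one at a time with $\ge(1/2+\gamma')n$ valid images \emph{given the current partial placement of their neighbours}; it does not let you pre-commit, for every vertex simultaneously, to a set of $\ge(1/2+\eps)n$ images that are mutually compatible along all $H$-edges. Your ``short argument'' would have to produce exactly this, and it does not exist in general.

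The paper sidesteps this by exploiting the bipartiteness of $H$ in a way you do not. First pass to a balanced bipartite spanning subgraph $G'=(A\cup B,E')$ of $G$ with $\delta(G')\ge(1-1/(2\Delta)+\gamma/2)m$ (where $n=2m$), and use an off-the-shelf bipartite packing theorem to fix one concrete copy $J\cong H$ inside $G'$. Now only permute the $A$-side: take $Q$ with parts $A$ and the multiset $\Gamma=\{\{N_J(a):a\in A\}\}$, joining $a$ to $N_{a'}$ whenever $N_{a'}\subseteq N_G(a)$. Every perfect matching of $Q$ is then automatically an isomorphic copy of $H$ in $G$ (the $B$-side is frozen), and since $|N_{a'}|\le\Delta$, the inclusion $N_{a'}\subseteq N_G(a)$ fails for at most $\Delta\cdot(1/(2\Delta)-\gamma/2)m<(1/2-\eps)m$ vertices $a$, giving $\delta(Q)\ge(1/2+\eps)m$ with no pruning tricks. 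The only conflicts left are colour clashes between two matching edges, which are $O(\mu m)$-bounded as you computed (the properness of $\chi$ is what guarantees no clash occurs \emph{within} a single star $E_G(a,N_{a'})$), and Theorem~\ref{conflicts} applies.
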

Sudakov and Volec~\cite{sudakov2017properly} showed that there exist a graph $H$ with maximum degree at most $\Delta$ and a $3.9 n /\Delta^2$-bounded edge colouring of $K_n$ which does not contain a rainbow copy of $H$. Therefore this theorem is also tight up to constant factors.
\begin{proof}
By Lemma 2.3 in~\cite{alonbipsg} there is a balanced bipartite spanning subgraph $G'=(A \cup B, E)$ of $G$ with minimum degree  $\delta(G')\geq (1 - 1/2\Delta +\gamma/2)m$, where $2m=n$.
By Theorem 3.5 in~\cite{bippacking}, the minimum degree condition ensures the existence of a subgraph $J$ of $G'$ that is isomorphic to $H$.
For each $a \in A$, let $N_a = \{ b \in B : ab \in E(J) \}$ denote the neighbourhood of $a$ in $J$.
Construct an auxiliary bipartite graph $Q=(V(Q), E(Q))$. The vertex set is $V(Q)=A \cup \Gamma$, where $\Gamma = \{\{N_a : a \in A \}\}$ as a multiset. 
The edge set is defined as  
$$
E(Q) = \{ a_1N_{a_2} :\, N_{a_2} \subseteq N_G(a_1) \}\;.
$$
Note that $Q$ is a balanced bipartite graph on $2m$ vertices. We first show that $\delta(Q)\geq (1/2 + \eps)m$, where $\eps=\gamma/2$. 
For each $a \in A$, there at most $(1/2\Delta - \eps)m$ vertices $b \in B$ such that $ab\notin E(G)$. 
Since $\Delta(J)\leq \Delta$, for each $b\in B$ there exist at most $\Delta$ vertices $a'\in A$ such that $b\in N_{a'}$. Thus, there are at most $(1/2 - \eps \Delta)m$ vertices $a'\in A$ such that $N_{a'}$ is not included in $N_G(a)$. In particular, we have $d_Q(a) \geq (1/2 + \eps)m$.
For each $a\in A$, we have $N_Q(N_a) = \cap_{b \in N_a} N_G(b)$.
So, by inclusion-exclusion, 
$$
d_Q(N_a) = \vert \cap_{b \in N_a} N_G(b) \vert \geq m - \sum_{b \in N_a} (m - \vert N_G(b) \vert) \geq  (1/2 + \eps)m\;.
$$
Hence, $\delta(Q)\geq (1/2 + \eps)n$.


Let $\chi$ be a proper $\mu n$-bounded edge colouring of $G$. Consider the following system of conflicts,
$$
\cF_Q = \{\{a_1N_{a_1'}, a_2N_{a_2'}\} : \exists\, x,y \in E(G) \text{ with } \chi(x)=\chi(y)\text{ and } \{x,y\} \subseteq E_G({a_1},N_{a_1'}) \cup E_G(a_2,N_{a_2'}) \}\;.
$$
Fix an edge  $a_1N_{a'_1} \in E(Q)$. 
For each $b_1 \in N_{a_1'}$, there are at most $\mu n/\Delta^2$ edges $a_2b_2$ with $\chi(a_2 b_2)= \chi(a_1b_1)$. Again, since $\Delta(J)\leq J$, $b_2$ is in at most $\Delta$ neighbourhoods $\Gamma_{a_2'}$. So, for each $b\in N_{a_1'}$, there are most $\mu n/\Delta$ edges $a_2N_{a_2'}$ conflicting with $a_1 N_{a_1'}$. Since $|N_{a_1'}|\leq \Delta$, the total number of conflicts involving edge $a_1 N_{a_1'}$ is at most $\mu n=2\mu m$. So $\cF$ is $2\mu m$-bounded.

We can apply Theorem \ref{conflicts} to the balanced bipartite graph $Q$ and the system of conflicts $\cF_Q$ to deduce the existence of a $\cF_Q$-conflict-free perfect matching $M$ in $Q$.
Define the subgraph $R=(V(R),E(R))$ of $G$ as follows. The vertex set is $V(R)=V(G)$ and edge set is
$$
E(R)=\{ab\in E(G):\,  \text{there exist $a'\in A$ such that }aN_{a'}\in E(M)\text{ and }b\in N_{a'}\}\;.
$$
We claim that $R$ is a rainbow subgraph of $G$ isomorphic to $H$.
Consider a bijective map $f: V(G) \to V(G)$, such that $f(u)=v$ if and only if $u N_v\in M$ for $u \in A$ and $f$ is the identity map on $B$.
We claim that $f$ is an isomorphism from $R$ to $J$.
To see this, first observe that $f$ is an automorphism of $V(G)$.
Now, consider an edge $ab \in E(R)$ and note that $f(a)f(b) = f(a)b$ where $f(a)$ is such that $a N_{f(a)} \in M$.
As $M$ is a matching, there is only one choice $N_{a'}$ such that $a N_{a'} \in E(M)$, implying that $a' = f(a)$.
By definition of $E(R)$, we have that $b \in N_{f(a)}$, so $f(a)b = f(a)f(b) \in E(J)$.
Similarly, one can show that for all edges $ab \in E(J)$, $f^{-1}(a)f^{-1}(b)= f^{-1}(a) b\in E(R)$. Thus $f$ is an isomorphism between $R$ and $J$, and since $J$ is isomorphic to $H$, so is $R$.

Finally, suppose for contradiction that there exist $x,y\in E(R)$ with $\chi(x)=\chi(y)$. If $x=a_1b_1$ and $y=a_2b_2$, let $a_1',a_2'\in A$ be such that $a_1N_{a_1'}, a_2'N_{a_2'} \in E(M)$. Then, as $x,y\in E(R)$, we have $b_1 \in N_{a_1'}$ and $b_2 \in N_{a_2'}$, implying that $a_1N_{a_1'}$ and $  a_2'N_{a_2'}$ conflict under $\cF_Q$. This is a contradiction as $M$ is a $\cF_Q$-conflict-free perfect matching. So $R$ is rainbow.
\end{proof}

\section{Further remarks}\label{sec:fur_rem}
We conclude the paper with a number of remarks and open questions.
\begin{itemize}
\item[1)] The condition on the minimum degree in Theorem~\ref{mainthm} is best possible. However, the value of $\mu$ that follows from our proof is far from being optimal. In Section~\ref{sec:final}, we showed that the statement is not true if $\mu>1/4$. Obtaining the best possible value for $\mu$ is a difficult problem, since it would imply a minimum degree version of the Ryser-Brualdi-Stein conjecture, which is wide open.
\item[2)] We believe that the statement of Theorem~\ref{mainthm} should also hold for system of conflicts. The only obstacle in our proof is Lemma~\ref{boxes}, which, in its current form, cannot be adapted to deal with conflicts instead of colours.

\item[3)] As shown in Section~\ref{sec:appl}, the methods presented in this paper are of potential interest to embed other conflict-free spanning structures in graphs with large minimum degree, beyond perfect matchings. A natural candidate is to embed Hamiltonian cycles in Dirac graphs. Krivelevich et al.t~\cite{KLScompatible} proved the existence of $\cF$-conflict-free Hamiltonian cycles in Dirac graphs, provided that the conflicts in $\cF$ are local. Their proof is substantially different from ours and relies on P\'{o}sa rotations. 

\item[4)] Lu and Sz\'ekely generalised the idea of system of conflicts to include, not only unordered pairs of edges, but sets of edges of any size~\cite{ls2007}. Under some sparsity conditions on the set of conflicts, they proved the existence of conflict-free perfect matchings in $K_{n,n}$. Our results can be seen as a first step towards extending the Lu-Sz\'ekely framework to Dirac graphs.

\item[5)] Csaba~\cite{bipBEC} proved the Bollob\'as-Eldridge-Catlin conjecture for embedding bipartite graphs of maximum degree $\Delta$ into any graph $G$ of minimum degree at least $(1-\beta)(1-1/(\Delta+1)) n$ for some $\beta>0$. It would be of interest to determine whether a form of Theorem~\ref{ss_corr} holds in this setting, since it does not follow as a direct consequence of Theorem~\ref{mainthm}. 


\end{itemize}

\medskip

\section{Acknowledgements} 

The authors want to thank Daniela K\"{u}hn, Allan Lo, Deryk Osthus and Benny Sudakov for fruitful discussions and remarks on the topic.

\bibliographystyle{plain}
{\small \bibliography{RPMbibliography}}
\end{document}